\providecommand{\U}[1]{\protect\rule{.1in}{.1in}}
\newtheorem{theorem}{Theorem}[section]
\newtheorem{corollary}[theorem]{Corollary}
\newtheorem{definition}[theorem]{Definition}
\newtheorem{lemma}[theorem]{Lemma}
\newtheorem{proposition}[theorem]{Proposition}
\newtheorem{remark}[theorem]{Remark}
\newenvironment{proof}[1][Proof]{\noindent\textbf{#1.} }{\ \rule{0.5em}{0.5em}}
\begin{document}

\title{On the behavior of least energy solutions of a fractional $(p,q(p))$-Laplacian
problem as p goes to infinity}
\author{Grey Ercole$^{\text{\thinspace a}}$, Aldo H. S. Medeiros$^{\text{\thinspace
a}}$\thinspace\ and Gilberto A. Pereira$^{\text{\thinspace b}}\medskip$\\{\small {$^{\mathrm{a}}$} Universidade Federal de Minas Gerais, Belo
Horizonte, MG, 30.123-970, Brazil. }\\{\small {$^{\mathrm{b}}$} Universidade Federal de Ouro Preto, Ouro Preto, MG,
35.400-000, Brazil.}\\{\small E-mails: grey@mat.ufmg.br\thanks{Corresponding author} ,
aldomedeiros@ufmg.br and gilberto.pereira@ufop.edu.br }}
\maketitle

\begin{abstract}
We study the behavior as \ $p\rightarrow\infty$ of $u_{p},$ a positive least
energy solution of the problem
\[
\left\{
\begin{array}
[c]{lll}%
\left[  \left(  -\Delta_{p}\right)  ^{\alpha}+\left(  -\Delta_{q(p)}\right)
^{\beta}\right]  u=\mu_{p}\left\Vert u\right\Vert _{\infty}^{p-2}%
u(x_{u})\delta_{x_{u}} & \mathrm{in} & \Omega\\
u=0 & \mathrm{in} & \mathbb{R}^{N}\setminus\Omega\\
\left\vert u(x_{u})\right\vert =\left\Vert u\right\Vert _{\infty}, &  &
\end{array}
\right.
\]
where $\Omega\subset\mathbb{R}^{N}$ is a bounded, smooth domain,
$\delta_{x_{u}}$ is the Dirac delta distribution supported at $x_{u},$
\[
\lim_{p\rightarrow\infty}\frac{q(p)}{p}=Q\in\left\{
\begin{array}
[c]{lll}%
(0,1) & \mathrm{if} & 0<\beta<\alpha<1\\
(1,\infty) & \mathrm{if} & 0<\alpha<\beta<1
\end{array}
\right.
\]
and
\[
\lim_{p\rightarrow\infty}\sqrt[p]{\mu_{p}}>R^{-\alpha},
\]
with $R$ denoting the inradius of $\Omega.$

\end{abstract}

\noindent\textbf{Keywords:}{\small {\ Asymptotic behavior, Dirac delta
distribution, fractional Sobolev spaces, viscosity solutions.}}

\noindent\textbf{2010 AMS Classification.} 35D40, 35R11, 35J60.

\section{Introduction}

Let $\Omega$ be a bounded, smooth domain of $\mathbb{R}^{N},$ $N>1,$ and
consider the Sobolev space of fractional order $s\in(0,1)$ and exponent
$m>1,$
\[
W_{0}^{s,m}(\Omega):=\left\{  u\in L^{m}(\mathbb{R}^{N}):u=0\ \mathrm{in}%
\ \mathbb{R}^{N}\setminus\Omega\quad\mathrm{and}\quad\left[  u\right]
_{s,m}<\infty\right\}  ,
\]
where
\[
\left[  u\right]  _{s,m}:=\left(  \int_{\mathbb{R}^{N}}\int_{\mathbb{R}^{N}%
}\frac{\left\vert u(x)-u(y)\right\vert ^{m}}{\left\vert x-y\right\vert
^{N+sm}}\mathrm{d}x\mathrm{d}y\right)  ^{\frac{1}{m}}%
\]
is the Gagliardo seminorm.

As it is well known, $\left(  W_{0}^{s,m}(\Omega),\left[  \cdot\right]
_{s,m}\right)  $ is a uniformly convex Banach space (also characterized as the
closure of $C_{c}^{\infty}(\Omega)$ with respect to $\left[  \cdot\right]
_{s,m}$), compactly embedded into $L^{r}(\Omega)$ whenever%
\[
1\leq r<m_{s}^{\ast}:=\left\{
\begin{array}
[c]{ll}%
\dfrac{Nm}{N-sm}, & m<N/s,\\
\infty, & m\geq N/s.
\end{array}
\right.
\]

Moreover,
\begin{equation}
W_{0}^{s,m}(\Omega)\hookrightarrow\hookrightarrow C_{0}(\overline{\Omega
})\quad\mathrm{if}\quad m>N/s. \label{cpt}%
\end{equation}
(The notation $A\hookrightarrow\hookrightarrow B$ means that the continuous
embedding $A\hookrightarrow B$ is compact.) It follows that infimum%
\[
\lambda_{s,m}:=\inf\left\{  \frac{\left[  u\right]  _{s,m}^{m}}{\left\Vert
u\right\Vert _{\infty}^{m}}:u\in W_{0}^{s,m}(\Omega)\setminus\left\{
0\right\}  \right\}
\]
is positive and, in fact, a minimum.

The compactness in (\ref{cpt}) is consequence of the following Morrey's type
inequality (see \cite{Guide})%
\begin{equation}
\sup_{(x,y)\not =(0,0)}\frac{\left\vert u(x)-u(y)\right\vert }{\left\vert
x-y\right\vert ^{s-\frac{N}{m}}}\leq C\left[  u\right]  _{s,m},\quad
\forall\,u\in W_{0}^{s,m}(\Omega) \label{Morrey}%
\end{equation}
which holds whenever $m>N/s.$ If $m$ is sufficiently large, the positive
constant $C$ in (\ref{Morrey}) can be chosen uniform with respect to $m$ (see
\cite[Remark 2.2]{FPL}).

Let $\left(  -\Delta_{m}\right)  ^{s}$ be the $s$-fractional $m$-Laplacian,
the operator acting from $W_{0}^{s,m}(\Omega)$ into its topological dual,
defined by
\[
\big\langle \left(  -\Delta_{m}\right)  ^{s}u,\varphi\big\rangle
_{s,m}:=\int_{\mathbb{R}^{N}}\int_{\mathbb{R}^{N}}\dfrac{\left\vert
u(x)-u(y)\right\vert ^{m-2}(u(x)-u(y))(\varphi(x)-\varphi(y))}{\left\vert
x-y\right\vert ^{N+sm}}\mathrm{d}x\mathrm{d}y,\quad\forall\,u,\varphi\in
W_{0}^{s,m}(\Omega).
\]
We recall that $\left(  -\Delta_{m}\right)  ^{s}u$ is the G\^{a}teaux
derivative at a function $u\in W_{0}^{s,m}(\Omega)$ of the Fr\'{e}chet
differentiable functional $v\mapsto m^{-1}\left[  v\right]  _{s,m}^{m}.$

An alternative pointwise expression for $\left(  -\Delta_{m}\right)  ^{s}u$
is
\begin{equation}
(\mathcal{L}_{s,m}u)(x):=2\int_{\mathbb{R}^{N}}\dfrac{\left\vert
u(x)-u(y)\right\vert ^{m-2}(u(y)-u(x))}{\left\vert x-y\right\vert ^{N+sm}%
}\mathrm{d}y. \label{Lsm}%
\end{equation}
As argued in \cite{LL}, this expression appears formally as follows
\begin{align*}
\big\langle\left(  -\Delta_{m}\right)  ^{s}u,\varphi\big\rangle_{s,m}  &
=\int_{\mathbb{R}^{N}}\int_{\mathbb{R}^{N}}\dfrac{\left\vert
u(x)-u(y)\right\vert ^{m-2}(u(x)-u(y))(\varphi(x)-\varphi(y))}{\left\vert
x-y\right\vert ^{N+sm}}\mathrm{d}x\mathrm{d}y\\
&  =\int_{\mathbb{R}^{N}}\varphi(x)\left(  \int_{\mathbb{R}^{N}}%
\dfrac{\left\vert u(x)-u(y)\right\vert ^{m-2}(u(x)-u(y))}{\left\vert
x-y\right\vert ^{N+sm}}\mathrm{d}y\right)  \mathrm{d}x\\
&  -\int_{\mathbb{R}^{N}}\varphi(y)\left(  \int_{\mathbb{R}^{N}}%
\dfrac{\left\vert u(x)-u(y)\right\vert ^{m-2}(u(x)-u(y))}{\left\vert
x-y\right\vert ^{N+sm}}\mathrm{d}x\right)  \mathrm{d}y\\
&  =\int_{\mathbb{R}^{N}}\varphi(x)(\mathcal{L}_{s,m}u)(x)\mathrm{d}x.
\end{align*}

In Section \ref{Sec1}, we consider the nonhomogeneous problem
\begin{equation}
\left\{
\begin{array}
[c]{lll}%
\left[  \left(  -\Delta_{p}\right)  ^{\alpha}+\left(  -\Delta_{q}\right)
^{\beta}\right]  u=\mu\left\vert u(x_{u})\right\vert ^{p-2}u(x_{u}%
)\delta_{x_{u}} & \mathrm{in} & \Omega\\
u=0 & \mathrm{in} & \mathbb{R}^{N}\setminus\Omega,\\
\left\vert u(x_{u})\right\vert =\left\Vert u\right\Vert _{\infty} &  &
\end{array}
\right.  \label{pq}%
\end{equation}
where $\alpha,\beta,$ $p,q$ and $\mu>0$ satisfy suitable conditions, $x_{u}%
\in\Omega$ is a point where $u$ attains its sup norm ($\left\vert
u(x_{u})\right\vert =\left\Vert u\right\Vert _{\infty}$), and $\delta_{x_{u}}$
is the Dirac delta distribution supported at $x_{u}.$

Proceeding as in \cite{AEP} and \cite{EP16}, one can arrive at (\ref{pq}) as
the limit case, as $r\rightarrow\infty,$ of the problem%
\[
\left\{
\begin{array}
[c]{lll}%
\left[  \left(  -\Delta_{p}\right)  ^{\alpha}+\left(  -\Delta_{q}\right)
^{\beta}\right]  u=\mu\left\Vert u\right\Vert _{r}^{p-r}\left\vert
u\right\vert ^{r-2}u & \mathrm{in} & \Omega\\
u=0 & \mathrm{in} & \mathbb{R}^{N}\setminus\Omega
\end{array}
\right.
\]
where $\left\Vert \cdot\right\Vert _{r}$ denotes the standard norm in the
Lebesgue space $L^{r}(\Omega).$

As usual, we interpret (\ref{pq}) as an identity between functionals applied
to the (weak) solution $u.$ Thus,
\begin{equation}
\big\langle\left(  -\Delta_{p}\right)  ^{\alpha}u,\varphi\big\rangle_{\alpha
,p}+\big\langle\left(  -\Delta_{q}\right)  ^{\beta}u,\varphi\big\rangle_{\beta
,q}=\mu\left\vert u(x_{u})\right\vert ^{p-2}u(x_{u})\varphi(x_{u})\quad
\forall\,\varphi\in X(\Omega), \label{weak1}%
\end{equation}
where $X(\Omega)$ is an appropriate Sobolev space (that will be derived in the
sequence). The functional at the left-hand side of (\ref{weak1}) is the
G\^{a}teaux derivative of the Fr\'{e}chet differentiable functional $v\mapsto
p^{-1}\left[  v\right]  _{\alpha,p}^{p}+q^{-1}\left[  v\right]  _{\beta,q}%
^{q}$ at $u.$ However, the functional at the right-hand side is merely related
to the right-sided G\^{a}teaux derivative of the functional $\varphi\mapsto
p^{-1}\left\Vert \varphi\right\Vert _{\infty}^{p}$ whenever $u$ assumes its
sup norm at a unique point $x_{u}$. This has to do with the following fact
(see Lemma \ref{gateaux} and Remark \ref{gateaux1}): if $u\in C(\overline
{\Omega})$ assumes its sup norm only at $x_{u}\in\Omega,$ then
\[
\lim_{\epsilon\rightarrow0^{+}}\frac{\left\Vert u+\epsilon\varphi\right\Vert
_{\infty}^{p}-\left\Vert u\right\Vert _{\infty}^{p}}{p\epsilon}=\left\vert
u(x_{u})\right\vert ^{p-2}u(x_{u})\varphi(x_{u}),\quad\forall\,\varphi\in
C(\overline{\Omega}).
\]

Therefore, we define the formal energy functional associated with (\ref{pq})
by
\[
E_{\mu}(u):=\frac{1}{p}\left[  u\right]  _{\alpha,p}^{p}+\frac{1}{q}\left[
u\right]  _{\beta,q}^{q}-\frac{\mu}{p}\left\Vert u\right\Vert _{\infty}%
^{p},\quad\mu>0,
\]
and formulate our hypotheses on $\alpha,\beta,$ $p$ and $q$ to guarantee the
well-definiteness of this functional. For this, we take into account
(\ref{cpt}) and the following known facts:

\begin{itemize}
\item $W_{0}^{s,p}(\Omega)\not \hookrightarrow W_{0}^{s,q}(\Omega)$ for any
$0<s<1\leq q<p\leq\infty$ (see \cite[Theorem 1.1]{PW}),

\item $W_{0}^{s_{2},m_{2}}(\Omega)\hookrightarrow W_{0}^{s_{1},m_{1}}%
(\Omega),$ whenever $0<s_{1}<s_{2}<1\leq m_{1}<m_{2}<\infty$ (see \cite[Lemma
2.6]{Brasco}).
\end{itemize}

Thus, we assume that $\alpha,\beta,$ $p$ and $q$ satisfy one of the following
conditions:%
\begin{equation}
0<\alpha<\beta<1\quad\mathrm{and}\quad N/\alpha<p<q \label{H1b}%
\end{equation}
or%

\begin{equation}
0<\beta<\alpha<1\quad\mathrm{and}\quad N/\beta<q<p. \label{H1a}%
\end{equation}

The assumption (\ref{H1b}) provides the chain of embeddings $W_{0}^{\beta
,q}(\Omega)\hookrightarrow W_{0}^{\alpha,p}(\Omega)\hookrightarrow
\hookrightarrow C_{0}(\overline{\Omega})$ whereas (\ref{H1a}) yields
$W_{0}^{\alpha,p}(\Omega)\hookrightarrow W_{0}^{\beta,q}(\Omega
)\hookrightarrow\hookrightarrow C_{0}(\overline{\Omega}).$ Therefore, the
Sobolev space
\[
X(\Omega):=\left\{
\begin{array}
[c]{lll}%
\left(  W_{0}^{\beta,q}(\Omega),\left[  \cdot\right]  _{\beta,q}\right)  &
\mathrm{if} & 0<\alpha<\beta<1\quad\mathrm{and}\quad N/\alpha<p<q\\
\left(  W_{0}^{\alpha,p}(\Omega),\left[  \cdot\right]  _{\alpha,p}\right)  &
\mathrm{if} & 0<\beta<\alpha<1\quad\mathrm{and}\quad N/\beta<q<p,
\end{array}
\right.
\]
is the natural domain for the energy functional $E_{\mu}.$ Note that
\[
X(\Omega)\subset W_{0}^{\alpha,p}(\Omega)\cap W_{0}^{\beta,q}(\Omega
)\mathrm{\quad\mathrm{and}\quad}X(\Omega)\hookrightarrow\hookrightarrow
C_{0}(\overline{\Omega}).
\]

Once we have chosen $X(\Omega),$ a weak solution of (\ref{pq}) is defined (see
Definition \ref{weak}) by means of (\ref{weak1}).

As for the parameter $\mu,$ we assume that
\begin{equation}
\mu>\lambda_{\alpha,p} \label{lamb}%
\end{equation}
where%
\begin{equation}
\lambda_{\alpha,p}:=\inf\left\{  \frac{\left[  u\right]  _{\alpha,p}^{p}%
}{\left\Vert u\right\Vert _{\infty}^{p}}:u\in W_{0}^{\alpha,p}(\Omega
)\setminus\left\{  0\right\}  \right\}  =\frac{\left[  e\right]  _{\alpha
,p}^{p}}{\left\Vert e\right\Vert _{\infty}^{p}}>0 \label{ep}%
\end{equation}
for some function $e\in W_{0}^{\alpha,p}(\Omega)\setminus\left\{  0\right\}
.$ The existence of $e$ is a consequence of the compact embedding of
$W_{0}^{\alpha,p}(\Omega)$ into $C_{0}(\overline{\Omega})$ that holds in both
cases (\ref{H1b}) and (\ref{H1a}).

It turns out that (\ref{lamb}) is also a necessary condition for the existence
of weak solutions (see Remark \ref{nec}).

Assuming the above conditions on $\alpha,\beta,p,q$ and $\mu$ we show the
existence of at least one positive weak solution that minimizes the energy
functional either on $W_{0}^{\beta,q}(\Omega)\setminus\left\{  0\right\}  ,$
when (\ref{H1b}) holds, or on the following Nehari-type set%
\begin{equation}
\mathcal{N}_{\mu}:=\left\{  u\in W_{0}^{\alpha,p}(\Omega)\setminus\left\{
0\right\}  :\left[  u\right]  _{\alpha,p}^{p}+\left[  u\right]  _{\beta,q}%
^{q}=\mu\left\Vert u\right\Vert _{\infty}^{p}\right\}  , \label{Neh}%
\end{equation}
when (\ref{H1a}) holds. Both type of minimizers are referred in this work as
\textit{least energy solutions} of (\ref{pq}). The reason behind the
appearance of the Dirac delta is that the set where a minimizer of $E_{\mu}$
attains its sup norm is a singleton (as we will show).

We conclude Section \ref{Sec1} by observing that the weak solutions of
(\ref{pq}) are also viscosity solutions of
\[
\mathcal{L}_{\alpha,p}u+\mathcal{L}_{\beta,q}u=0\quad\mathrm{in}%
\,D:=\Omega\setminus\left\{  x_{u}\right\}
\]
and use this fact to argue that nonnegative least energy solutions are
strictly positive in $\Omega.$

In Section \ref{Sec2}, we fix the fractional orders $\alpha$ and $\beta$ (with
$\alpha\not =\beta$), allow $q$ and $\mu$ to depend suitably on $p$ ($q=q(p)$
and $\mu=\mu_{p}$) and denote by $u_{p}$ the positive least energy solution of
the problem%
\[
\left\{
\begin{array}
[c]{lll}%
\left[  \left(  -\Delta_{p}\right)  ^{\alpha}+\left(  -\Delta_{q(p)}\right)
^{\beta}\right]  u=\mu_{p}\left\vert u(x_{p})\right\vert ^{p-2}u(x_{p}%
)\delta_{x_{p}} & \mathrm{in} & \Omega\\
u=0 & \mathrm{in} & \mathbb{R}^{N}\setminus\Omega\\
\left\vert u(x_{p})\right\vert =\left\Vert u\right\Vert _{\infty}. &  &
\end{array}
\right.
\]
In the sequence we determine the asymptotic behavior of the pair $\left(
u_{p},x_{p}\right)  \in X(\Omega)\times\Omega,$ as $p$ goes to $\infty.$

Our main results are stated in Theorem \ref{Main} below, where, for each
$s\in(0,1],$
\[
C_{0}^{0,s}(\overline{\Omega}):=\left\{  u\in C_{0}(\overline{D}):\,\left\vert
u\right\vert _{s}<\infty\right\}  ,
\]
with $\left\vert \cdot\right\vert _{s}$ denoting the $s$-H\"{o}lder seminorm,
defined by%
\begin{equation}
\left\vert u\right\vert _{s}:=\sup\left\{  \frac{\left\vert
u(x)-u(y)\right\vert }{\left\vert x-y\right\vert ^{s}}:x,y\in\overline{\Omega
}\quad\mathrm{and}\quad x\not =y\right\}  . \label{bholder}%
\end{equation}

\begin{theorem}
\label{Main}Assume that%
\[
\lim_{p\rightarrow\infty}\frac{q(p)}{p}=:Q\in\left\{
\begin{array}
[c]{lll}%
(0,1) & \mathrm{if} & 0<\beta<\alpha<1\\
(1,\infty) & \mathrm{if} & 0<\alpha<\beta<1
\end{array}
\right.
\]
and%
\[
\Lambda:=\lim_{p\rightarrow\infty}\sqrt[p]{\mu_{p}}>R^{-\alpha},
\]
where $R$ is the inradius of $\Omega$ (i.e. the radius of the largest ball
inscribed in $\Omega$).

Let $p_{n}\rightarrow\infty.$ There exist $x_{\infty}\in\Omega$ and
$u_{\infty}\in C_{0}^{0,\beta}(\overline{\Omega})$ such that, up to a
subsequence, $x_{u_{p_{n}}}\rightarrow x_{\infty}$ and $u_{p_{n}}\rightarrow
u_{\infty}$ uniformly in $\overline{\Omega}.$ Moreover:
\end{theorem}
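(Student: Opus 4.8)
The plan is to normalize, reduce the statement to a compactness argument plus a two--sided control of the sup norm, and extract the limit along a subsequence. Write $t_{p}:=\Vert u_{p}\Vert_{\infty}$ and $w_{p}:=u_{p}/t_{p}$, so that $\Vert w_{p}\Vert_{\infty}=1$. Testing the weak formulation (\ref{weak1}) with $\varphi=u_{p}$ gives the identity $[u_{p}]_{\alpha,p}^{p}+[u_{p}]_{\beta,q}^{q}=\mu_{p}t_{p}^{p}$ (i.e.\ $u_{p}$ lies on the Nehari set (\ref{Neh})), and dividing by $t_{p}^{p}$ yields
\[
[w_{p}]_{\alpha,p}^{p}+t_{p}^{q-p}[w_{p}]_{\beta,q}^{q}=\mu_{p}.
\]
In particular $[w_{p}]_{\alpha,p}\le\mu_{p}^{1/p}$, which is bounded since $\mu_{p}^{1/p}\to\Lambda$. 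Because the constant in Morrey's inequality (\ref{Morrey}) is uniform for large $p$, I get $|w_{p}|_{\alpha-N/p}\le C[w_{p}]_{\alpha,p}\le C'$, hence uniform boundedness and equicontinuity of $\{w_{p_{n}}\}$. Arzel\`a--Ascoli then produces, along a subsequence, $w_{p_{n}}\to w_{\infty}$ uniformly with $\Vert w_{\infty}\Vert_{\infty}=1$; passing to the limit in the H\"older quotients gives $w_{\infty}\in C_{0}^{0,\beta}(\overline{\Omega})$, and compactness of $\overline{\Omega}$ gives $x_{u_{p_{n}}}\to x_{\infty}\in\overline{\Omega}$.

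To place $x_{\infty}$ in the interior I would exploit that $u_{p}$ vanishes on $\partial\Omega$. Choosing $\bar{x}\in\partial\Omega$ nearest to $x_{p}:=x_{u_{p_{n}}}$ and setting $d(x_{p})=\mathrm{dist}(x_{p},\partial\Omega)$, Morrey's inequality yields $t_{p}=u_{p}(x_{p})-u_{p}(\bar{x})\le|u_{p}|_{\alpha-N/p}\,d(x_{p})^{\alpha-N/p}\le C\mu_{p}^{1/p}\,t_{p}\,d(x_{p})^{\alpha-N/p}$. Cancelling $t_{p}$ forces $d(x_{p})\ge(C\mu_{p}^{1/p})^{-1/(\alpha-N/p)}\to(C\Lambda)^{-1/\alpha}>0$, so $x_{\infty}\in\Omega$.

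The heart of the argument is the bound $0<c\le t_{p}\le C$. First I would establish the asymptotic identities $[w_{p_{n}}]_{\alpha,p_{n}}\to|w_{\infty}|_{\alpha}$ and $[w_{p_{n}}]_{\beta,q(p_{n})}\to|w_{\infty}|_{\beta}$, by reading each Gagliardo seminorm as the $L^{m}$--norm, with respect to $|x-y|^{-N}\mathrm{d}x\,\mathrm{d}y$, of the $s$--H\"older quotient of $w_{p_{n}}$ and letting the exponent tend to $\infty$, so these norms converge to the corresponding $L^{\infty}$--norms, namely the seminorms (\ref{bholder}) of $w_{\infty}$. Granting this, when $p<q$ (condition (\ref{H1b})) the displayed identity already caps $t_{p}$: from $t_{p}^{q-p}[w_{p}]_{\beta,q}^{q}\le\mu_{p}$ one takes $(q-p)$--th roots and uses $[w_{p}]_{\beta,q}\to|w_{\infty}|_{\beta}>0$. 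When $q<p$ (condition (\ref{H1a})) I would instead compare energies: since $\Lambda>R^{-\alpha}$, fix $\phi$ with $\Vert\phi\Vert_{\infty}=1$ and $|\phi|_{\alpha}<\Lambda$ (e.g.\ a truncated multiple of the distance to $\partial\Omega$), which also guarantees $\mu_{p}>[\phi]_{\alpha,p}^{p}$ for large $p$, so that $\phi$ can be projected onto the Nehari set as $t\phi$; then $\tfrac{p-q}{pq}[u_{p}]_{\beta,q}^{q}=E_{\mu}(u_{p})\le E_{\mu}(t\phi)=\tfrac{p-q}{pq}t^{q}[\phi]_{\beta,q}^{q}$, and taking $q$--th roots together with $[w_{p}]_{\beta,q}\to|w_{\infty}|_{\beta}>0$ bounds $t_{p}$. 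The lower bound on $t_{p}$ is symmetric, with the roles of the two conditions interchanged. Consequently, along a further subsequence $t_{p_{n}}\to t_{\infty}\in(0,\infty)$, passing to the limit in the displayed identity gives $\Lambda=\max\{|w_{\infty}|_{\alpha},\,t_{\infty}^{Q-1}|w_{\infty}|_{\beta}^{Q}\}$, and finally $u_{p_{n}}=t_{p_{n}}w_{p_{n}}\to t_{\infty}w_{\infty}=:u_{\infty}$ uniformly in $\overline{\Omega}$, with $u_{\infty}\in C_{0}^{0,\beta}(\overline{\Omega})$.

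The step I expect to be the \emph{main obstacle} is precisely this two--sided control of $t_{p}$, and inside it the convergence $[w_{p}]_{s,m}\to|w_{\infty}|_{s}$. The difficulty is that the measure $|x-y|^{-N}\mathrm{d}x\,\mathrm{d}y$ is infinite near the diagonal, so the $L^{m}\to L^{\infty}$ passage must be justified by using that the H\"older quotient of $w_{p_{n}}$ tends to $0$ as $x\to y$ (because $s<1$) while the $w_{p_{n}}$ are uniformly H\"older and converge uniformly; coordinating this with the simultaneously varying exponents $p$ and $q(p)$, and with the nondegeneracy of $\mu_{p}-\lambda_{\alpha,p}$ secured by $\Lambda>R^{-\alpha}$ (which keeps the energy comparison and the Nehari projection meaningful), is the delicate part of the proof.
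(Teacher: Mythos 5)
Your overall skeleton is the same as the paper's (Nehari identity from testing with $u_p$, energy comparison with a multiple of the distance function, uniform Morrey constant plus Arzel\`a--Ascoli, Fatou to pass seminorm information to the limit), but the step you yourself flag as the main obstacle is a genuine gap, and the justification you sketch for it does not work. You assert the ``asymptotic identities'' $[w_{p_n}]_{\alpha,p_n}\to|w_\infty|_\alpha$ and $[w_{p_n}]_{\beta,q_n}\to|w_\infty|_\beta$ by viewing $[\,\cdot\,]_{s,m}$ as an $L^m$ norm against $|x-y|^{-N}\,\mathrm{d}x\,\mathrm{d}y$ and ``letting $m\to\infty$''. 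That measure is infinite near the diagonal, so the $L^m\to L^\infty$ passage is not available; only the $\liminf$ half survives (via H\"older's inequality on $\Omega\times\Omega$ and Fatou, which is what the paper uses). The $\limsup$ half is false in general: the relevant integrand is the $(s+N/m)$-H\"older quotient raised to the $m$, and for a function that is exactly $s$-H\"older at a point the Gagliardo seminorm $[\,\cdot\,]_{s,m}$ is $+\infty$ for every $m$. Your repair attempt (``the H\"older quotient tends to $0$ as $x\to y$ because $s<1$'') addresses the wrong quotient and, in the case $0<\alpha<\beta$, is not even available: Morrey applied to $[w_p]_{\alpha,p}\le\mu_p^{1/p}$ only gives an $(\alpha-N/p)$-H\"older bound, which controls neither the $\beta$-H\"older quotient nor membership in $C_0^{0,\beta}(\overline{\Omega})$. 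For the same reason your earlier claim that ``passing to the limit in the H\"older quotients gives $w_\infty\in C_0^{0,\beta}$'' is unjustified when $\alpha<\beta$, and your upper bound on $t_p$ in the case $p<q$, which leans on $[w_p]_{\beta,q}\to|w_\infty|_\beta$, is circular.

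The paper circumvents exactly this issue in Propositions \ref{limits} and \ref{betaHolder}. The upper bound $\limsup[u_p]_{\beta,q}\le(\Lambda R^\beta)^{1/(Q-1)}$ is obtained purely variationally, by comparing $E_{\mu_p}(u_p)$ with $E_{\mu_p}(t\phi_R)$ where $\phi_R$ is the cone function on the inscribed ball; since $\phi_R$ is Lipschitz, the convergence $[\phi_R]_{s,m}\to|\phi_R|_s=R^{1-s}$ is legitimate (\cite[Lemma 7]{EPS}), and the matching lower bound comes from $\lambda_{\beta,q}\|u_p\|_\infty^q\le[u_p]_{\beta,q}^q$ together with $\sqrt[q]{\lambda_{\beta,q}}\to R^{-\beta}$ (Proposition \ref{Rmin}). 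Only after these two-sided bounds are in hand does one get equiboundedness in $C_0^{0,\beta-N/m_0}$, uniform convergence, $|u_\infty|_\beta\le(\Lambda R^\beta)^{1/(Q-1)}$ by Fatou, and the reverse inequality from the minimality of $R^{-\beta}$ for the quotient $|v|_\beta/\|v\|_\infty$ --- never from convergence of the seminorms of the varying functions themselves. Your interior-ness argument for $x_\infty$ via Morrey and cancellation of $t_p$ is a nice alternative to the paper's (which simply uses $u_\infty(x_\infty)=\|u_\infty\|_\infty>0$), but the proof as a whole does not close without replacing the seminorm-convergence claim by the variational bounds above.
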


\begin{enumerate}
\item[(i)] $0<u_{\infty}(x)\leq\left(  \Lambda R^{\beta}\right)  ^{\frac
{1}{Q-1}}\left(  \operatorname{dist}(x,\partial\Omega)\right)  ^{\beta}%
\quad\forall\,x\in\Omega,$

\item[(ii)] $\operatorname{dist}(x_{\infty},\partial\Omega)=R,$

\item[(iii)] $u_{\infty}(x_{\infty})=\left\Vert u_{\infty}\right\Vert
_{\infty}=R^{\beta}(\Lambda R^{\beta})^{\frac{1}{Q-1}},$

\item[(iv)] $\left\vert u_{\infty}\right\vert _{\beta}=(\Lambda R^{\beta
})^{\frac{1}{Q-1}},$

\item[(v)] $\dfrac{\left\vert u_{\infty}\right\vert _{\beta}}{\left\Vert
u_{\infty}\right\Vert _{\infty}}=R^{-\beta}=\min\left\{  \dfrac{\left\vert
v\right\vert _{\beta}}{\left\Vert v\right\Vert _{\infty}}:v\in C_{0}^{0,\beta
}(\overline{\Omega})\setminus\left\{  0\right\}  \right\}  ,$

\item[(vi)] $u_{\infty}$ is a viscosity solution of
\[
\max\left\{  \mathcal{L}_{\alpha}^{+}u,\left(  \mathcal{L}_{\beta}%
^{+}u\right)  ^{Q}\right\}  =\max\left\{  -\mathcal{L}_{\alpha}^{-}u,\left(
-\mathcal{L}_{\beta}^{-}u\right)  ^{Q}\right\}  \quad\mathrm{in}%
\,\Omega\setminus\left\{  x_{\infty}\right\}  .
\]

\end{enumerate}

In the above equation the operators are defined according to the following
notation, where $0<s<1$:
\begin{equation}
\left(  \mathcal{L}_{s}^{+}u\right)  (x):=\sup_{y\in\mathbb{R}^{N}%
\setminus\left\{  x\right\}  }\frac{u(y)-u(x)}{\left\vert y-x\right\vert ^{s}%
}\quad\mathrm{and}\quad\left(  \mathcal{L}_{s}^{-}u\right)  (x):=\inf
_{y\in\mathbb{R}^{N}\setminus\left\{  x\right\}  }\frac{u(y)-u(x)}{\left\vert
y-x\right\vert ^{s}}. \label{Ls+-}%
\end{equation}

There are a substantial amount of papers in the recent literature dealing with
the asymptotic behavior of solutions as a parameter goes to infinity in
problems that involve a combination of first order, local operators and
nonlinearities of different homogeneity degrees (see \cite{AEP}, \cite{BM16},
\cite{ChaPe}, \cite{ChaPa}, \cite{ChaPa2}, \cite{SRS}, \cite{EP16},
\cite{MihaiRossi19}). In \cite{AEP}, Alves, Ercole and Pereira determined the
asymptotic behavior, as $p\rightarrow\infty,$ of the following problem of
order $1$
\begin{equation}
\left\{
\begin{array}
[c]{lll}%
\left[  -\Delta_{p}+(-\Delta_{q(p)})\right]  u=\mu_{p}\left\vert
u(x_{u})\right\vert ^{p-2}u(x_{u})\delta_{x_{u}} & \mathrm{in} & \Omega\\
u=0 & \mathrm{in} & \partial\Omega\\
\left\vert u(x_{u})\right\vert =\left\Vert u\right\Vert _{\infty}. &  &
\end{array}
\right.  \label{local}%
\end{equation}
Their work motived us to formulate an adequate fractional version of
(\ref{local}) and study, in the present paper, the behavior of the
corresponding least energy solutions as $p$ goes to infinity.

As for fractional operators, there are few works focusing such type of
asymptotic behavior. Most of recent ones deal with the problem of determining
the limit equation satisfied, in the viscosity sense, by the limit functions
(as $m\rightarrow\infty$) of a family $\left\{  u_{m}\right\}  $ of
minimizers. In general, such limit equation combines the operators
$\mathcal{L}_{s}^{+},$ $\mathcal{L}_{s}^{-}$ and their sum
\[
\mathcal{L}_{s}:=\mathcal{L}_{s}^{+}+\mathcal{L}_{s}^{-}.
\]

We refer to this latter operator as $s$-H\"{o}lder infinity Laplacian,
accordingly \cite{Chamb}, where it was introduced. In that paper, Chambolle,
Lindgren and Monneau studied the problem of minimizing the functional%
\[
\left[  u\right]  _{\Omega,s,m}:=\int_{\Omega}\int_{\Omega}\frac{\left\vert
u(x)-u(y)\right\vert ^{m}}{\left\vert x-y\right\vert ^{N+sm}}\mathrm{d}%
x\mathrm{d}y
\]
on the set
\[
X_{g}:=\left\{  u\in C(\overline{\Omega}):u=g\quad\mathrm{on}\,\partial
\Omega\right\}
\]
where $g\in C^{0,s}(\partial\Omega)$ is given. After showing the existence of
a unique minimizer $u_{m}\in X_{g}$ for this problem (assuming $m>N/s$), they
proved that, up to a subsequence, $u_{m}\rightarrow u_{\infty}\in
C^{0,s}(\overline{\Omega})$ uniformly and that this limit function is a
viscosity solution of
\[
\left\{
\begin{array}
[c]{lll}%
\mathcal{L}_{s}u=0 & \mathrm{in} & \Omega\\
u=g & \mathrm{on} & \partial\Omega.
\end{array}
\right.
\]
They also showed that $u_{\infty}$ is an optimal H\"{o}lder extension of $g$
in $\Omega.$

In \cite{LL}, Lindqvist and Lindgren characterized the asymptotic behavior (as
$m\rightarrow\infty$) of the only positive, normalized first eigenfunction
$u_{m}$ of $(-\Delta_{m})^{s}$ in $W_{0}^{s,m}(\Omega).$ That is, $u_{m}>0$ in
$\Omega,$ $\left\Vert u_{m}\right\Vert _{m}=1$ and $\left[  u_{m}\right]
_{s,m}^{m}=\Lambda_{s,m},$ where
\[
\Lambda_{s,m}:=\inf\left\{  \left[  u\right]  _{s,m}^{m}:u\in W_{0}%
^{s,m}(\Omega)\quad\mathrm{and}\quad\left\Vert u\right\Vert _{m}=1\right\}
\]
is the the first eigenvalue of $(-\Delta_{m})^{s}.$ Among several results,
they proved that%
\begin{equation}
\lim_{m\rightarrow\infty}\sqrt[m]{\Lambda_{s,m}}=R^{-s}\leq\frac{\left\vert
\phi\right\vert _{s}}{\left\Vert \phi\right\Vert _{\infty}}\quad\forall
\,\phi\in C_{c}^{\infty}(\Omega)\setminus\left\{  0\right\}  \label{aux2c}%
\end{equation}
and that any limit function $u_{\infty}$ of the family $\left\{
u_{m}\right\}  $ is a positive viscosity solution of the problem
\[
\left\{
\begin{array}
[c]{lll}%
\max\left\{  \mathcal{L}_{\infty}u\ ,\ \mathcal{L}_{\infty}^{-}u+R^{-s}%
u\right\}  =0 & \mathrm{in} & \Omega\\
u=0 & \mathrm{in} & \mathbb{R}^{N}\setminus\Omega.
\end{array}
\right.
\]

In \cite{FPL}, Ferreira and P\'{e}rez-Llanos studied the asypmtotic behavior,
as $m\rightarrow\infty,$ of the solutions of the problem%
\[
\left\{
\begin{array}
[c]{lll}%
{\displaystyle\int_{\mathbb{R}^{N}}}
\dfrac{\left\vert u(x)-u(y)\right\vert ^{m-2}(u(y)-u(x))}{\left\vert
x-y\right\vert ^{N+sm}}\mathrm{d}y=f(x,u) & \mathrm{in} & \Omega\\
u=g & \mathrm{in} & \mathbb{R}^{N}\setminus\Omega,
\end{array}
\right.
\]
for the cases $f=f(x)$ and $f=f(u)=\left\vert u\right\vert ^{\theta(m)-2}u$
with $\Theta:=\lim_{m\rightarrow\infty}\theta(m)/m<1$ (that is, the exponent
of the nonlinearity goes to infinity "sublinearly"). In the first case, they
obtained different limit equations involving the operators $\mathcal{L}%
_{\infty},$ $\mathcal{L}_{\infty}^{+}$ and $\mathcal{L}_{\infty}^{-}$
according to the sign of the function $f(x).$ In the second case, they
established the limit equation
\[
\min\left\{  -\mathcal{L}_{\infty}^{-}u-u^{\Theta},-\mathcal{L}_{\infty
}u\right\}  =0.
\]
Such results in that paper are compatible with the ones obtained for the local
operator in \cite{BDM} for the first case and in \cite{ChaPa} for the second case.

Recently, in \cite{SR}, Rossi and Silva studied the problem of minimizing the
Gagliardo seminorm $\left[  \cdot\right]  _{s,m}$ among the functions $v\in
W^{s,m}(\mathbb{R}^{N})$ satisfying the constraints
\begin{equation}
v=g\quad\mathrm{in}\,\mathbb{R}^{N}\setminus\Omega\quad\mathrm{and}%
\quad\mathcal{L}^{N}\left(  \left\{  v>0\right\}  \cap\Omega\right)
\leq\alpha,\label{vol}%
\end{equation}
where the function $g$ in $\mathbb{R}^{N}\setminus\Omega$ and the constant
$\alpha\in(0,\mathcal{L}^{N}(\Omega))$ are given, and $\mathcal{L}^{N}(D)$
denotes the $N$-dimensional Lebesgue volume of the subset $D\subset
\mathbb{R}^{N}.$

They proved that, up to subsequences, the family $\left\{  u_{m}\right\}  $ of
minimizers converges uniformly to a function $u_{\infty},$ as $m\rightarrow
\infty,$ that solves the equation%
\[
\mathcal{L}_{s}^{-}u=0\quad\mathrm{in}\,\left\{  u>0\right\}  \cap\Omega
\]
in the viscosity sense and also minimizes the $s$-H\"{o}lder seminorm
$\left\vert \cdot\right\vert _{s}$ among the functions in $W^{s,\infty
}(\mathbb{R}^{N})$ satisfying (\ref{vol}). Further, they showed the
convergence of the respective extremal values, that is: $\left[  u_{m}\right]
_{s,m}\rightarrow\left\vert u_{\infty}\right\vert _{s}.$

More recently, in \cite{EPS}, Ercole, Pereira and Sanchis studied the
asymptotic behavior of $u_{m},$ the positive solution of the minimizing
problem%
\[
\Lambda_{m}=\inf\left\{  \left[  u\right]  _{s,m}^{m}:u\in W_{0}^{s,m}%
(\Omega)\quad\mathrm{and}\quad\int_{\Omega}(\log\left\vert u\right\vert
)\omega\mathrm{d}x=0\right\}
\]
where $\omega\in L^{1}(\Omega)$ is a positive weight satisfying $\left\Vert
\omega\right\Vert _{1}=1.$ After showing that $u_{m}$ is the positive (weak)
solution of the singular problem
\[
\left\{
\begin{array}
[c]{lll}%
-(\Delta_{m})^{s}u=\Lambda_{m}\omega(x)u^{-1} & \mathrm{in} & \Omega\\
u=0 & \mathrm{in} & \mathbb{R}^{N}\setminus\Omega,
\end{array}
\right.
\]
they proved that, up to subsequence, $\left\{  u_{m}\right\}  $ converges
uniformly to a function $u_{\infty}\in C_{0}^{0,s}(\overline{\Omega})$ and
$\sqrt[m]{\Lambda_{m}}\rightarrow\left\vert u_{\infty}\right\vert _{s}.$
Moreover, the limit function $u_{\infty}$ is a positive viscosity solution of
\[
\left\{
\begin{array}
[c]{lll}%
\mathcal{L}_{s}^{-}u+\left\vert u\right\vert _{s}=0 & \mathrm{in} & \Omega\\
u=0 & \mathrm{in} & \mathbb{R}^{N}\setminus\Omega
\end{array}
\right.
\]
satisfying
\[
0\leq\int_{\Omega}(\log\left\vert u_{\infty}\right\vert )\omega\mathrm{d}%
x<\infty\quad\mathrm{and}\quad Q_{s}(u_{\infty})\leq Q_{s}(u)\quad
\forall\,u\in C_{0}^{0,s}(\overline{\Omega})\setminus\left\{  0\right\}  ,
\]
where $Q_{s}(u):=\left\vert u\right\vert _{s}/\exp\left(  \int_{\Omega}%
(\log\left\vert u\right\vert )\omega\mathrm{d}x\right)  .$

Our approach in this paper is inspired by the arguments and techniques
developed in some of the works above mentioned and can be applied to the
fractional version of \cite{EP16} and also for studying a fractional version
for the system considered in \cite{MihaiRossi19}.

\section{Existence of a positive least energy solution\label{Sec1}}

In this section, we assume that $\mu$ satisfy (\ref{lamb}) and that
$\alpha,\beta,p$ and $q$ are related by one of the conditions (\ref{H1b}) or
(\ref{H1a}). Our goal is to prove the existence of at least one positive least
energy solution $u_{\mu}\in X(\Omega)\setminus\left\{  0\right\}  $ for the
problem (\ref{pq}).

\begin{remark}
\label{nochange}We recall that $\left[  \left\vert u\right\vert \right]
_{s,p}\leq\left[  u\right]  _{s,p}$ for all $u\in W_{0}^{s,m}(\Omega)$ since
\[
\left\vert \left\vert u(x)\right\vert -\left\vert u(y)\right\vert \right\vert
<\left\vert u(x)-u(y)\right\vert \quad\mathrm{if}\,u(x)u(y)<0.
\]

\end{remark}

\begin{definition}
\label{weak}We say that a function $u\in X(\Omega)$ is a weak solution of
(\ref{pq}) if $\left\Vert u\right\Vert _{\infty}=\left\vert u(x_{u}%
)\right\vert $ and%
\[
\big\langle \left(  -\Delta_{p}\right)  ^{\alpha}u,\varphi\big\rangle
_{\alpha,p}+\big\langle \left(  -\Delta_{q}\right)  ^{\beta}u,\varphi
\big\rangle _{\beta,q}=\mu\left\vert u(x_{u})\right\vert ^{p-2}u(x_{u}%
)\varphi(x_{u}),\quad\forall\,\varphi\in X(\Omega).
\]

\end{definition}

\begin{remark}
\label{nec}If $u\in X(\Omega)$ is a weak solution of (\ref{pq}), then (by
taking $\varphi=u$)%
\[
\left[  u\right]  _{\alpha,p}^{p}+\left[  u\right]  _{\beta,q}^{q}%
=\mu\left\Vert u\right\Vert _{\infty}^{p}.
\]
If, in addition, $u\not \equiv 0$ the definition of $\lambda_{\alpha,p}$
yields
\[
\mu\left\Vert u\right\Vert _{\infty}^{p}=\left[  u\right]  _{\alpha,p}%
^{p}+\left[  u\right]  _{\beta,q}^{q}>\left[  u\right]  _{\alpha,p}^{p}%
\geq\lambda_{\alpha,p}\left\Vert u\right\Vert _{\infty}^{p}.
\]
This shows that (\ref{lamb}) is a necessary condition for the existence of a
nontrivial weak solution.
\end{remark}

\begin{proposition}
Suppose that $\alpha,\beta,p$ and $q$ satisfy (\ref{H1b}). There exists at
least one nonnegative function $u_{\mu}\in X(\Omega)\setminus\left\{
0\right\}  $ such that%
\[
E_{\mu}(u_{\mu})\leq E_{\mu}(u)\quad\forall\,u\in X(\Omega).
\]

\end{proposition}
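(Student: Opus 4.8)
The plan is to use the direct method of the calculus of variations on the space $X(\Omega)=W_0^{\beta,q}(\Omega)$, which is the natural domain of $E_\mu$ under hypothesis (\ref{H1b}). First I would verify that $E_\mu$ is bounded below on $X(\Omega)$. Using the embedding $W_0^{\beta,q}(\Omega)\hookrightarrow W_0^{\alpha,p}(\Omega)$ guaranteed by (\ref{H1b}), there is a constant $c>0$ with $[u]_{\alpha,p}\le c\,[u]_{\beta,q}$, and the definition of $\lambda_{\alpha,p}$ in (\ref{ep}) gives $\|u\|_\infty^p\le \lambda_{\alpha,p}^{-1}[u]_{\alpha,p}^p$. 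Hence the negative term $-\tfrac{\mu}{p}\|u\|_\infty^p$ is controlled by $[u]_{\alpha,p}^p$, which in turn is dominated by a power of $[u]_{\beta,q}$; since $q>p$, the coercive leading term $\tfrac{1}{q}[u]_{\beta,q}^q$ grows faster and absorbs it. This shows $E_\mu(u)\to+\infty$ as $[u]_{\beta,q}\to\infty$, so $E_\mu$ is coercive and bounded below.

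Next I would take a minimizing sequence $\{u_n\}\subset X(\Omega)$ with $E_\mu(u_n)\to \inf_{X(\Omega)} E_\mu =: m$. Coercivity forces $\{[u_n]_{\beta,q}\}$ to be bounded, so by reflexivity (uniform convexity) of $W_0^{\beta,q}(\Omega)$ we may pass to a subsequence with $u_n\rightharpoonup u_\mu$ weakly in $X(\Omega)$. Because of the compact embedding $X(\Omega)\hookrightarrow\hookrightarrow C_0(\overline\Omega)$, we also get $u_n\to u_\mu$ uniformly, hence $\|u_n\|_\infty\to\|u_\mu\|_\infty$ and $u_n(x)\to u_\mu(x)$ pointwise. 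The two Gagliardo seminorms $[\,\cdot\,]_{\alpha,p}$ and $[\,\cdot\,]_{\beta,q}$ are weakly lower semicontinuous (each is a norm on its space, or one argues via Fatou on the double integral after extracting a.e.-convergent subsequences of the difference quotients), while the $\|\cdot\|_\infty^p$ term is continuous under uniform convergence. Therefore
\[
E_\mu(u_\mu)\le\liminf_{n\to\infty}E_\mu(u_n)=m,
\]
and since $u_\mu\in X(\Omega)$ we conclude $E_\mu(u_\mu)=m$, i.e. $u_\mu$ is a minimizer.

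Two things remain: that the minimizer may be taken nonnegative, and that it is nontrivial. For nonnegativity I would invoke Remark \ref{nochange}: replacing $u_\mu$ by $|u_\mu|$ does not increase either seminorm while leaving $\|u_\mu\|_\infty$ unchanged, so $E_\mu(|u_\mu|)\le E_\mu(u_\mu)$ and $|u_\mu|$ is again a minimizer. For nontriviality, the key point is that the hypothesis $\mu>\lambda_{\alpha,p}$ in (\ref{lamb}) makes the infimum strictly negative. Indeed, picking the extremal function $e$ from (\ref{ep}) and testing $E_\mu$ along the ray $te$, one has
\[
E_\mu(te)=\frac{t^p}{p}[e]_{\alpha,p}^p+\frac{t^q}{q}[e]_{\beta,q}^q-\frac{\mu t^p}{p}\|e\|_\infty^p
=\frac{t^p}{p}\bigl(\lambda_{\alpha,p}-\mu\bigr)\|e\|_\infty^p+\frac{t^q}{q}[e]_{\beta,q}^q,
\]
and since $\lambda_{\alpha,p}-\mu<0$ and $q>p$, the first (negative) term dominates as $t\to0^+$, giving $E_\mu(te)<0=E_\mu(0)$ for small $t>0$. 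Thus $m<0=E_\mu(0)$, which forces $u_\mu\not\equiv 0$. The main obstacle I anticipate is the weak lower semicontinuity of the \emph{subordinate} seminorm $[\,\cdot\,]_{\alpha,p}$ with respect to weak convergence in the \emph{stronger} space $W_0^{\beta,q}(\Omega)$: one must justify that weak convergence in $X(\Omega)$ passes appropriately to the $(\alpha,p)$-term, which is cleanest to handle by extracting an a.e.-convergent subsequence of $u_n$ (available from the uniform convergence) and applying Fatou's lemma to the nonnegative integrand defining $[\,\cdot\,]_{\alpha,p}^p$.
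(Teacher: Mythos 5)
Your overall strategy (direct method: coercivity via the comparison of $\|u\|_\infty^p$ with a power of $[u]_{\beta,q}$, weak lower semicontinuity of the seminorms plus compactness of $X(\Omega)\hookrightarrow\hookrightarrow C_0(\overline\Omega)$, and nonnegativity via Remark \ref{nochange}) is the same as the paper's, and those parts are fine. However, your nontriviality argument has a genuine gap. Under (\ref{H1b}) the space is $X(\Omega)=W_0^{\beta,q}(\Omega)$, which is the \emph{smaller} space, continuously embedded in $W_0^{\alpha,p}(\Omega)$. The extremal function $e$ of (\ref{ep}) is only known to lie in $W_0^{\alpha,p}(\Omega)$; there is no reason for $e$ to belong to $W_0^{\beta,q}(\Omega)$, so $[e]_{\beta,q}$ may well be infinite and the identity
\[
E_\mu(te)=\frac{t^p}{p}\bigl(\lambda_{\alpha,p}-\mu\bigr)\|e\|_\infty^p+\frac{t^q}{q}[e]_{\beta,q}^q
\]
is then meaningless (or reads $+\infty$ for every $t>0$), giving you nothing. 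Note that the paper does use exactly this identity, in (\ref{Ee}), but only under (\ref{H1a}), where $X(\Omega)=W_0^{\alpha,p}(\Omega)$ and $e$ genuinely lies in the working space.

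The repair is the one the paper carries out: instead of $e$ itself, pick $u\in W_0^{\alpha,p}(\Omega)\setminus\{0\}$ with $[u]_{\alpha,p}^p/\|u\|_\infty^p<(\lambda_{\alpha,p}+\mu)/2$, approximate it by $\varphi_n\in C_c^\infty(\Omega)$ (dense in $W_0^{\alpha,p}(\Omega)$, and $[\varphi_n]_{\alpha,p}\to[u]_{\alpha,p}$ together with $\|\varphi_n\|_\infty\to\|u\|_\infty$ by the compact embedding into $C_0(\overline\Omega)$), and fix $n_0$ with $[\varphi_{n_0}]_{\alpha,p}^p/\|\varphi_{n_0}\|_\infty^p<\mu$. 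Since $\varphi_{n_0}\in C_c^\infty(\Omega)\subset X(\Omega)$ has finite $[\varphi_{n_0}]_{\beta,q}$, the expansion of $E_\mu(t\varphi_{n_0})$ is legitimate and, because $p<q$, the negative $t^p$ term dominates as $t\to0^+$, yielding $E_\mu(t\varphi_{n_0})<0$. With this substitution your proof goes through.
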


\begin{proof}
Let
\begin{equation}
\lambda_{\beta,q}:=\inf\left\{  \frac{\left[  u\right]  _{\beta,q}^{q}%
}{\left\Vert u\right\Vert _{\infty}^{q}}:u\in W_{0}^{\beta,q}(\Omega
)\setminus\left\{  0\right\}  \right\}  >0. \label{lambbq}%
\end{equation}
Since $X(\Omega)=W_{0}^{\beta,q}(\Omega)$ we have
\[
E_{\mu}(u)\geq\frac{1}{q}\left[  u\right]  _{\beta,q}^{q}-\frac{\mu}%
{p}\left\Vert u\right\Vert _{\infty}^{p}\geq\frac{1}{q}\left[  u\right]
_{\beta,q}^{q}-\frac{\mu}{p}\left[  u\right]  _{\beta,q}^{p}\left(
\sqrt[q]{\lambda_{\beta,q}}\right)  ^{-p}=h(\left[  u\right]  _{\beta,q}%
)\quad\forall\,u\in X(\Omega),
\]
where $h:[0,\infty)\mapsto\mathbb{R}$ is given by%
\[
h(t):=\frac{1}{q}t^{q}-\frac{\mu}{p}\left(  \sqrt[q]{\lambda_{\beta,q}%
}\right)  ^{-p}t^{p}.
\]

Noting that $\lim_{t\rightarrow\infty}h(t)=\infty$ and
\[
h(t)\geq h(\left[  \mu\left(  \sqrt[q]{\lambda_{\beta,q}}\right)
^{-p}\right]  ^{\frac{1}{q-p}})=-\left(  \frac{1}{p}-\frac{1}{q}\right)
\left[  \mu\left(  \sqrt[q]{\lambda_{\beta,q}}\right)  ^{-p}\right]
^{\frac{q}{q-p}}%
\]
we conclude that $E_{\mu}$ is coercive and bounded from below. Hence, by
standards arguments of the Calculus of Variations (recall that $X(\Omega
)=W_{0}^{\beta,q}(\Omega)\hookrightarrow W_{0}^{\alpha,p}(\Omega
)\hookrightarrow\hookrightarrow C_{0}(\overline{\Omega})$) we can show that
the functional $E_{\mu}$ assumes the global minimum value at a function
$u_{\mu}\in X(\Omega).$

Now, in order to verify that $u_{\mu}\not \equiv 0$ we show that $E_{\mu
}(v)<0=E_{\mu}(0)$ for some $v\in W_{0}^{\beta,q}(\Omega).$ Let $u\in
W_{0}^{\alpha,p}(\Omega)\setminus\left\{  0\right\}  $ be such that
\[
\lambda_{\alpha,p}\leq\frac{\left[  u\right]  _{\alpha,p}^{p}}{\left\Vert
u\right\Vert _{\infty}^{p}}<\frac{\lambda_{\alpha,p}+\mu}{2}.
\]
By density and compactness, there exists a sequence $\left\{  \varphi
_{n}\right\}  \subset C_{c}^{\infty}(\Omega)$ such that $\left[  \varphi
_{n}\right]  _{\alpha,p}\rightarrow\left[  u\right]  _{\alpha,p}$ and
$\left\Vert \varphi_{n}\right\Vert _{\infty}\rightarrow\left\Vert u\right\Vert
_{\infty}.$ Therefore, there exists $n_{0}\in\mathbb{N}$ such that
\[
\frac{\left[  \varphi_{n_{0}}\right]  _{\alpha,p}^{p}}{\left\Vert
\varphi_{n_{0}}\right\Vert _{\infty}^{p}}\leq\frac{\lambda_{\alpha,p}+\mu}%
{2}<\mu.
\]
Since $\varphi_{n_{0}}\in X(\Omega)$ we have%
\[
E_{\mu}(t\varphi_{n_{0}})\leq\frac{t^{q}}{q}\left[  \varphi_{n_{0}}\right]
_{\beta,q}^{q}-\frac{t^{p}}{p}\frac{\left\Vert \varphi_{n_{0}}\right\Vert
_{\infty}^{p}}{2}\left(  \mu-\lambda_{\alpha,p}\right)  <0
\]
for some $t>0$ sufficiently small. Thus, $v:=t\varphi_{n_{0}}$ is such that
$E_{\mu}(v)<0.$

According to Remark \ref{nochange}, $E_{\mu}(\left\vert u_{\mu}\right\vert
)\leq E_{\mu}(u_{\mu}).$ Therefore, we can assume $u_{\mu}\geq0$ in $\Omega.$
\end{proof}

In the sequence we show that under (\ref{H1b}) any minimizer of the energy
functional $E_{\mu}$ is a weak solution of (\ref{pq}). For this we need the
following result proved in \cite{HL16}.

\begin{lemma}
\label{gateaux}Let $u\in C(\overline{\Omega})$ and $\Gamma_{u}:=\left\{
x\in\Omega:\left\vert u(x)\right\vert =\left\Vert u\right\Vert _{\infty
}\right\}  .$ Then,
\[
\lim_{\epsilon\rightarrow0^{+}}\frac{\left\Vert u+\epsilon\varphi\right\Vert
_{\infty}^{p}-\left\Vert u\right\Vert _{\infty}^{p}}{p\epsilon}=\max\left\{
\left\vert u(x)\right\vert ^{p-2}u(x)\varphi(x):x\in\Gamma_{u}\right\}
,\quad\forall\,\varphi\in C(\overline{\Omega}).
\]

\end{lemma}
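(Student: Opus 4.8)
The plan is to reduce the statement to a one-sided derivative of the sup norm itself and then apply the chain rule. Write $M:=\left\Vert u\right\Vert _{\infty}$ and $h(\epsilon):=\left\Vert u+\epsilon\varphi\right\Vert _{\infty}$, and recall that $\Gamma_{u}$ is nonempty and compact since $\overline{\Omega}$ is compact and $u$ continuous. If $u\equiv0$ then $M=0$, $\Gamma_{u}=\overline{\Omega}$, and both sides vanish: the right-hand side because $\left\vert u\right\vert ^{p-2}u\equiv0$, and the left-hand side because $h(\epsilon)^{p}=\epsilon^{p}\left\Vert \varphi\right\Vert _{\infty}^{p}$ gives difference quotient $\epsilon^{p-1}\left\Vert \varphi\right\Vert _{\infty}^{p}/p\rightarrow0$ (as $p>1$). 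So I assume $M>0$. Since $t\mapsto t^{p}$ is differentiable at $t=M>0$, once I show that $h$ is right-differentiable at $0$ the chain rule for one-sided derivatives will give
\[
\lim_{\epsilon\rightarrow0^{+}}\frac{\left\Vert u+\epsilon\varphi\right\Vert _{\infty}^{p}-M^{p}}{p\epsilon}=M^{p-1}\,h'(0^{+}).
\]
Because $\left\vert u(x)\right\vert =M$ on $\Gamma_{u}$, one has $M^{p-1}\operatorname{sgn}(u(x))=\left\vert u(x)\right\vert ^{p-2}u(x)$ there, so it remains only to prove
\[
h'(0^{+})=\max\left\{  \operatorname{sgn}(u(x))\varphi(x):x\in\Gamma_{u}\right\}  =:L,
\]
and then to pull the positive constant $M^{p-1}$ inside the maximum.

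For the lower bound I pick $x_{0}\in\Gamma_{u}$ realizing $L$. For $\epsilon$ small the sign of $u(x_{0})+\epsilon\varphi(x_{0})$ agrees with that of $u(x_{0})\not=0$, whence $\left\vert u(x_{0})+\epsilon\varphi(x_{0})\right\vert =M+\epsilon\operatorname{sgn}(u(x_{0}))\varphi(x_{0})=M+\epsilon L$; testing the sup norm at this single point gives $h(\epsilon)\geq M+\epsilon L$, hence $\liminf_{\epsilon\rightarrow0^{+}}(h(\epsilon)-M)/\epsilon\geq L$.

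The upper bound is the main obstacle, and I would treat it by a Danskin-type compactness argument, the difficulty being that the point where $\left\Vert u+\epsilon\varphi\right\Vert _{\infty}$ is attained may drift with $\epsilon$. Suppose, for contradiction, that $\limsup_{\epsilon\rightarrow0^{+}}(h(\epsilon)-M)/\epsilon>L$; choose $\epsilon_{n}\rightarrow0^{+}$ and maximizers $x_{n}\in\overline{\Omega}$ with $h(\epsilon_{n})=\left\vert u(x_{n})+\epsilon_{n}\varphi(x_{n})\right\vert$ and $(h(\epsilon_{n})-M)/\epsilon_{n}\geq L+\delta$ for some $\delta>0$. By compactness of $\overline{\Omega}$ I extract $x_{n}\rightarrow\bar{x}$. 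Since $h$ is $\left\Vert \varphi\right\Vert _{\infty}$-Lipschitz in $\epsilon$ (so $h(\epsilon_{n})\rightarrow M$) and $\epsilon_{n}\varphi(x_{n})\rightarrow0$, passing to the limit in $h(\epsilon_{n})=\left\vert u(x_{n})+\epsilon_{n}\varphi(x_{n})\right\vert$ forces $\left\vert u(\bar{x})\right\vert =M$, i.e. $\bar{x}\in\Gamma_{u}$. Then $u$ has a fixed sign near $\bar{x}$, so for large $n$ I may drop the absolute value and use $\pm u(x_{n})\leq M$ to obtain the uniform estimate $h(\epsilon_{n})\leq M+\epsilon_{n}\operatorname{sgn}(u(\bar{x}))\varphi(x_{n})$; dividing by $\epsilon_{n}$ and letting $n\rightarrow\infty$ yields $L+\delta\leq\operatorname{sgn}(u(\bar{x}))\varphi(\bar{x})\leq L$, a contradiction. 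This establishes $\limsup_{\epsilon\rightarrow0^{+}}(h(\epsilon)-M)/\epsilon\leq L$, so $h'(0^{+})=L$ exists, and combining with the chain-rule identity above completes the proof.
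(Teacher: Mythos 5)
Your proof is correct. Note, however, that the paper does not prove this lemma at all: it is imported verbatim from \cite{HL16} (Hynd--Lindgren), so there is no internal argument to compare against. What you supply is a self-contained, elementary proof via a Danskin-type envelope argument: reduce to the right-derivative of $h(\epsilon)=\left\Vert u+\epsilon\varphi\right\Vert_{\infty}$, get the lower bound by testing at a maximizer of $\operatorname{sgn}(u)\varphi$ over $\Gamma_{u}$, and get the upper bound by tracking a sequence of maximizers $x_{n}$ of $\left\vert u+\epsilon_{n}\varphi\right\vert$ and showing its limit point lands in $\Gamma_{u}$; the chain rule at $M=\left\Vert u\right\Vert_{\infty}>0$ then converts $h'(0^{+})$ into the stated expression. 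All the individual steps check out: the Lipschitz bound $\left\vert h(\epsilon)-h(\epsilon')\right\vert\leq\left\vert\epsilon-\epsilon'\right\vert\left\Vert\varphi\right\Vert_{\infty}$, the sign stability near a point where $\left\vert u\right\vert=M>0$, and the estimate $\sigma u(x_{n})\leq M$ are exactly what is needed. This is essentially the argument one finds in Hynd--Lindgren and in Danskin's theorem for parametrized maxima, so you have in effect reproved the cited result rather than found a shortcut; the benefit is that the paper's reader no longer needs the external reference. Two small caveats worth being aware of, both inherited from the paper's own imprecise statement rather than introduced by you: $\Gamma_{u}$ is defined with $x\in\Omega$ rather than $x\in\overline{\Omega}$, so for a general $u\in C(\overline{\Omega})$ your limit point $\bar{x}$ (and the maximizer $x_{0}$ in the lower bound) could a priori sit on $\partial\Omega$; this is harmless in the paper's setting, where $u\in C_{0}(\overline{\Omega})$ forces all maximum points of a nontrivial $u$ to be interior. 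Likewise your treatment of $u\equiv 0$ uses the convention $\left\vert 0\right\vert^{p-2}\cdot 0=0$, which is unambiguous here since $p>N/\alpha\geq 2$ throughout the paper.
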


\begin{remark}
\label{gateaux1}According to the notation of the Lemma \ref{gateaux}, if
\[
\max\left\{  \left\vert u(x)\right\vert ^{p-2}u(x)\varphi(x):x\in\Gamma
_{u}\right\}  \leq L\leq\min\left\{  \left\vert u(x)\right\vert ^{p-2}%
u(x)\varphi(x):x\in\Gamma_{u}\right\}  ,\quad\forall\,\varphi\in
C(\overline{\Omega}),
\]
for some $L\in\mathbb{R},$ then%
\[
\left\vert u(x)\right\vert ^{p-2}u(x)\varphi(x)=L=\left\vert u(y)\right\vert
^{p-2}u(y)\varphi(y),\quad\forall\,\varphi\in C(\overline{\Omega}%
)\quad\mathrm{and}\quad x,y\in\Gamma_{u}.
\]
Of course this implies that $\Gamma_{u}$ is a singleton, say $\Gamma
_{u}=\left\{  x_{u}\right\}  ,$ and therefore Lemma \ref{gateaux} yields
\[
\lim_{\epsilon\rightarrow0^{+}}\frac{\left\Vert u+\epsilon\varphi\right\Vert
_{\infty}^{p}-\left\Vert u\right\Vert _{\infty}^{p}}{p\epsilon}=L=\left\vert
u(x_{u})\right\vert ^{p-2}u(x_{u})\varphi(x_{u}),\quad\forall\,\varphi\in
C(\overline{\Omega}).
\]

\end{remark}

\begin{proposition}
\label{isweakb}Suppose that $\alpha,\beta,p$ and $q$ satisfy (\ref{H1b}). If
$u\in X(\Omega)$ satisfies
\[
E_{\mu}(u)\leq E_{\mu}(v)\quad\forall\,v\in X(\Omega),
\]
then $u$ is a weak solution of (\ref{pq}).
\end{proposition}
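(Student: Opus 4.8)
The plan is to exploit that $u$ is a global minimizer of $E_{\mu}$ over the \emph{vector space} $X(\Omega)=W_{0}^{\beta,q}(\Omega)$, so that for each fixed $\varphi\in X(\Omega)$ the scalar function $g(\epsilon):=E_{\mu}(u+\epsilon\varphi)$ has a global minimum at $\epsilon=0$. I would split $E_{\mu}$ as $E_{\mu}(v)=F(v)-\tfrac{\mu}{p}\|v\|_{\infty}^{p}$, where $F(v):=\tfrac{1}{p}[v]_{\alpha,p}^{p}+\tfrac{1}{q}[v]_{\beta,q}^{q}$. The term $F$ is the Fréchet differentiable functional recalled after (\ref{weak1}), whose derivative at $u$ in the direction $\varphi$ is the \emph{linear} functional
\[
T(\varphi):=\big\langle\left(-\Delta_{p}\right)^{\alpha}u,\varphi\big\rangle_{\alpha,p}+\big\langle\left(-\Delta_{q}\right)^{\beta}u,\varphi\big\rangle_{\beta,q};
\]
this is a genuine (two-sided) derivative and is linear in $\varphi$. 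Note that both pairings are well defined for $\varphi\in W_{0}^{\beta,q}(\Omega)$, since (\ref{H1b}) gives $W_{0}^{\beta,q}(\Omega)\hookrightarrow W_{0}^{\alpha,p}(\Omega)$.

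The sup-norm term is where the only real difficulty lies, and it is handled by Lemma \ref{gateaux}. Writing $\Gamma_{u}:=\{x\in\Omega:|u(x)|=\|u\|_{\infty}\}$, the lemma gives the one-sided limit
\[
\lim_{\epsilon\rightarrow0^{+}}\frac{\|u+\epsilon\varphi\|_{\infty}^{p}-\|u\|_{\infty}^{p}}{p\epsilon}=\max\left\{|u(x)|^{p-2}u(x)\varphi(x):x\in\Gamma_{u}\right\}.
\]
Hence the right derivative $g'(0^{+})$ exists and equals $T(\varphi)-\mu\max\{\,\cdots\,\}$. Since $g(\epsilon)\geq g(0)$ for all $\epsilon$, dividing by $\epsilon>0$ and letting $\epsilon\to0^{+}$ yields $g'(0^{+})\geq0$, that is,
\[
T(\varphi)\geq\mu\max\left\{|u(x)|^{p-2}u(x)\varphi(x):x\in\Gamma_{u}\right\},\qquad\forall\,\varphi\in X(\Omega).
\]

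The final step is to upgrade this inequality to the desired identity using linearity of $T$. Replacing $\varphi$ by $-\varphi$ and using $T(-\varphi)=-T(\varphi)$ together with $\max\{-a_{x}\}=-\min\{a_{x}\}$ (with $a_{x}:=|u(x)|^{p-2}u(x)\varphi(x)$), I obtain the reverse bound $T(\varphi)\leq\mu\min\{a_{x}:x\in\Gamma_{u}\}$. Combining the two bounds gives, for every $\varphi\in X(\Omega)$,
\[
\mu\max\left\{a_{x}:x\in\Gamma_{u}\right\}\leq T(\varphi)\leq\mu\min\left\{a_{x}:x\in\Gamma_{u}\right\}.
\]
This is exactly the hypothesis of Remark \ref{gateaux1} with $L=T(\varphi)/\mu$; since the maximum over $\Gamma_{u}$ is always at least the minimum, equality holds throughout, forcing $\Gamma_{u}$ to be a singleton $\{x_{u}\}$ and $T(\varphi)=\mu|u(x_{u})|^{p-2}u(x_{u})\varphi(x_{u})$ for all $\varphi$. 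As $x_{u}\in\Gamma_{u}$ we have $|u(x_{u})|=\|u\|_{\infty}$, and the last identity is precisely the weak formulation in Definition \ref{weak}; thus $u$ is a weak solution of (\ref{pq}). The one genuine obstacle is the non-differentiability of $v\mapsto\|v\|_{\infty}^{p}$, which the combination of Lemma \ref{gateaux} (one-sided derivative) and the linearity trick resolves cleanly.
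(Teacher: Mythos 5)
Your proof is correct and follows essentially the same route as the paper's: compute the one-sided derivative of $\epsilon\mapsto E_{\mu}(u+\epsilon\varphi)$ at $0^{+}$, use Lemma \ref{gateaux} for the sup-norm term, apply the argument to both $\varphi$ and $-\varphi$, and invoke Remark \ref{gateaux1} to force $\Gamma_{u}$ to be a singleton and obtain the weak formulation. The only difference is presentational (your explicit split $E_{\mu}=F-\tfrac{\mu}{p}\|\cdot\|_{\infty}^{p}$ versus the paper's $A(\epsilon)-\mu B(\epsilon)$), which changes nothing of substance.
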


\begin{proof}
Let $\varphi\in X(\Omega)$ and $\epsilon>0.$ By hypothesis,
\[
0\leq\frac{E_{\mu}(u+\epsilon\varphi)-E_{\mu}(\varphi)}{\epsilon}%
=A(\epsilon)-\mu B(\epsilon)
\]
where%
\[
A(\epsilon):=\frac{\left[  u+\epsilon\varphi\right]  _{\alpha,p}^{p}-\left[
u\right]  _{\alpha,p}^{p}}{\epsilon p}+\frac{\left[  u+\epsilon\varphi\right]
_{\beta,q}^{q}-\left[  u\right]  _{\beta,q}^{q}}{\epsilon q}%
\]
and%
\[
B(\epsilon)=\frac{\left\Vert u+\epsilon\varphi\right\Vert _{\infty}%
^{p}-\left\Vert u\right\Vert _{\infty}^{p}}{\epsilon p}.
\]

As we already know (from the Introduction)
\[
L:=\lim_{\epsilon\rightarrow0^{+}}A(\epsilon)=\big\langle \left(  -\Delta
_{p}\right)  ^{\alpha}u,\varphi\big\rangle _{\alpha,p}+\big\langle \left(
-\Delta_{q}\right)  ^{\beta}u,\varphi\big\rangle _{\beta,q}.
\]

According to Lemma \ref{gateaux}%
\[
\lim_{\epsilon\rightarrow0^{+}}B(\epsilon)=\max\left\{  \left\vert
u(x)\right\vert ^{p-2}u(x)\varphi(x):x\in\Gamma_{u}\right\}  .
\]
Consequently,
\[
\mu\max\left\{  \left\vert u(x)\right\vert ^{p-2}u(x)\varphi(x):x\in\Gamma
_{u}\right\}  \leq L.
\]

Now, repeating the above arguments with $\varphi$ replaced with $-\varphi$ we
also conclude that%
\[
L\leq\mu\min\left\{  \left\vert u(x)\right\vert ^{p-2}u(x)\varphi
(x):x\in\Gamma_{u}\right\}  .
\]

It follows that (see Remark \ref{gateaux1}) $\Gamma_{u}=\left\{
x_{u}\right\}  $ and%
\[
L=\mu\left\vert u(x_{u})\right\vert ^{p-2}u(x_{u})\varphi(x_{u}).
\]

\end{proof}

Now, let us analyze $E_{\mu}$ under the hypothesis (\ref{H1a}). First we
observe that $E_{\mu}$ is unbounded from below in $X(\Omega).$ In fact, this
follows from the identity (where $e\in W_{0}^{\alpha,p}(\Omega)$ is given in
(\ref{ep}))
\begin{equation}
E_{\mu}(te)=\frac{t^{q}}{q}\left[  e\right]  _{\beta,q}^{q}-\frac{t^{p}}%
{p}\left(  \mu-\lambda_{\alpha,p}\right)  \left\Vert e\right\Vert _{\infty
}^{p},\quad\forall\,t>0. \label{Ee}%
\end{equation}

Thus, as usual, we look for a minimizer of $E_{\mu}$ restricted to Nehari-type
set $\mathcal{N}_{\mu}$ given by (\ref{Neh}).

Taking (\ref{H1a}) into account, the following properties for a function $u\in
X(\Omega)\setminus\left\{  0\right\}  $ can be easily verified
\begin{equation}
u\in\mathcal{N}_{\mu}\Longleftrightarrow E_{\mu}(u)=\left(  \frac{1}{q}%
-\frac{1}{p}\right)  \left[  u\right]  _{\beta,q}^{q} \label{1/q-1/p}%
\end{equation}
and
\begin{equation}
tu\in\mathcal{N}_{\mu}\Longleftrightarrow\left[  u\right]  _{\alpha,p}^{p}%
<\mu\left\Vert u\right\Vert _{\infty}^{p}\quad\mathrm{and}\quad t=\left(
\frac{\left[  u\right]  _{\beta,q}^{q}}{\mu\left\Vert u\right\Vert _{\infty
}^{p}-\left[  u\right]  _{\alpha,p}^{p}}\right)  ^{\frac{1}{p-q}}.
\label{condneh}%
\end{equation}

The latter property shows that $\mathcal{N}_{\mu}\not =\varnothing,$ since%
\[
\left[  e\right]  _{\alpha,p}^{p}=\lambda_{\alpha,p}\left\Vert e\right\Vert
_{\infty}^{p}<\mu\left\Vert e\right\Vert _{\infty}^{p}.
\]
Moreover, combining (\ref{lambbq}) and (\ref{1/q-1/p}) we obtain,
\[
\mu\left\Vert u\right\Vert _{\infty}^{p}=\left[  u\right]  _{\alpha,p}%
^{p}+\left[  u\right]  _{\beta,q}^{q}>\left[  u\right]  _{\beta,q}^{q}%
\geq\lambda_{\beta,q}\left\Vert u\right\Vert _{\infty}^{q},
\]
for an arbitrary $u\in\mathcal{N}_{\mu}.$ Consequently,%
\[
\left\Vert u\right\Vert _{\infty}>\left(  \frac{\lambda_{\beta,q}}{\mu
}\right)  ^{\frac{1}{p-q}}>0,\quad\forall\,u\in\mathcal{N}_{\mu}%
\]
and%
\[
E_{\mu}(u)\geq\left(  \frac{1}{q}-\frac{1}{p}\right)  \lambda_{\beta
,q}\left\Vert u\right\Vert _{\infty}^{q}>\left(  \frac{1}{q}-\frac{1}%
{p}\right)  \lambda_{\beta,q}\left(  \frac{\lambda_{\beta,q}}{\mu}\right)
^{\frac{q}{p-q}}>0,\quad\forall\,u\in\mathcal{N}_{\mu}.
\]

Another property is that%
\begin{equation}
\left[  u\right]  _{\alpha,p}<\frac{\sqrt[p]{\mu}}{\sqrt[q]{\lambda_{\beta,q}%
}}\left[  u\right]  _{\beta,q},\quad\forall\,u\in\mathcal{N}_{\mu},
\label{aux1}%
\end{equation}
which also follows from (\ref{lambbq}), since%
\[
\left[  u\right]  _{\alpha,p}^{p}<\left[  u\right]  _{\alpha,p}^{p}+\left[
u\right]  _{\beta,q}^{q}=\mu\left\Vert u\right\Vert _{\infty}^{p}\leq
\mu\left(  \frac{\left[  u\right]  _{\beta,q}^{q}}{\lambda_{\beta,q}}\right)
^{\frac{p}{q}}=\mu\left(  \lambda_{\beta,q}\right)  ^{-\frac{p}{q}}\left[
u\right]  _{\beta,q}^{p}.
\]

\begin{proposition}
Suppose that $\alpha,\beta,p$ and $q$ satisfy (\ref{H1a}). There exists at
least one nonnegative function $u_{\mu}\in W_{0}^{\alpha,p}(\Omega
)\setminus\left\{  0\right\}  $ such that%
\[
E_{\mu}(u_{\mu})\leq E_{\mu}(u)\quad\forall\,u\in\mathcal{N}_{\mu}.
\]

\end{proposition}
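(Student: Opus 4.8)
The plan is to minimize $E_{\mu}$ over $\mathcal{N}_{\mu}$ by the direct method, correcting the weak limit with a fiber rescaling. Set $c_{\mu}:=\inf\left\{  E_{\mu}(u):u\in\mathcal{N}_{\mu}\right\}  ,$ which the estimates preceding this proposition show to be a positive real number, and pick a minimizing sequence $\left\{  u_{n}\right\}  \subset\mathcal{N}_{\mu}.$ First I would establish boundedness in $X(\Omega)=W_{0}^{\alpha,p}(\Omega)$: by (\ref{1/q-1/p}) the sequence $\left\{  \left[  u_{n}\right]  _{\beta,q}\right\}  $ is bounded because $E_{\mu}(u_{n})\rightarrow c_{\mu},$ and then (\ref{aux1}) forces $\left\{  \left[  u_{n}\right]  _{\alpha,p}\right\}  $ to be bounded as well. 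By reflexivity of $W_{0}^{\alpha,p}(\Omega)$ and the compact embedding $W_{0}^{\alpha,p}(\Omega)\hookrightarrow\hookrightarrow C_{0}(\overline{\Omega}),$ a subsequence (not relabeled) satisfies $u_{n}\rightharpoonup u$ weakly in $W_{0}^{\alpha,p}(\Omega)$ and $u_{n}\rightarrow u$ uniformly on $\overline{\Omega}$; in particular $\left\Vert u_{n}\right\Vert _{\infty}\rightarrow\left\Vert u\right\Vert _{\infty}.$ The uniform lower bound $\left\Vert u_{n}\right\Vert _{\infty}>(\lambda_{\beta,q}/\mu)^{1/(p-q)}>0$ obtained above then yields $u\not\equiv0.$

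Next I would check that a multiple of $u$ lies on $\mathcal{N}_{\mu}.$ Weak convergence in $W_{0}^{\alpha,p}(\Omega)$ transfers to weak convergence in $W_{0}^{\beta,q}(\Omega)$ via the continuous embedding in (\ref{H1a}), so both Gagliardo seminorms are weakly lower semicontinuous along the sequence: $\left[  u\right]  _{\alpha,p}^{p}\leq\liminf\left[  u_{n}\right]  _{\alpha,p}^{p}$ and $\left[  u\right]  _{\beta,q}^{q}\leq\liminf\left[  u_{n}\right]  _{\beta,q}^{q}.$ Combining these with the manifold identity $\left[  u_{n}\right]  _{\alpha,p}^{p}=\mu\left\Vert u_{n}\right\Vert _{\infty}^{p}-\left[  u_{n}\right]  _{\beta,q}^{q}$ and the strong convergence $\left\Vert u_{n}\right\Vert _{\infty}\rightarrow\left\Vert u\right\Vert _{\infty}$ gives $\left[  u\right]  _{\alpha,p}^{p}+\left[  u\right]  _{\beta,q}^{q}\leq\mu\left\Vert u\right\Vert _{\infty}^{p}.$ Since $u\not\equiv0$ forces $\left[  u\right]  _{\beta,q}^{q}>0,$ the strict inequality $\left[  u\right]  _{\alpha,p}^{p}<\mu\left\Vert u\right\Vert _{\infty}^{p}$ holds, so by (\ref{condneh}) there is a unique $t>0$ with $tu\in\mathcal{N}_{\mu}$; moreover the inequality $\mu\left\Vert u\right\Vert _{\infty}^{p}-\left[  u\right]  _{\alpha,p}^{p}\geq\left[  u\right]  _{\beta,q}^{q}$ makes $t^{p-q}\leq1,$ hence $t\leq1$ because $p>q.$

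The conclusion then follows by comparing energies through (\ref{1/q-1/p}). Since $tu\in\mathcal{N}_{\mu}$ and $t\leq1,$
\[
E_{\mu}(tu)=\left(  \frac{1}{q}-\frac{1}{p}\right)  t^{q}\left[  u\right]  _{\beta,q}^{q}\leq\left(  \frac{1}{q}-\frac{1}{p}\right)  \left[  u\right]  _{\beta,q}^{q}\leq\left(  \frac{1}{q}-\frac{1}{p}\right)  \liminf\left[  u_{n}\right]  _{\beta,q}^{q}=c_{\mu},
\]
while $E_{\mu}(tu)\geq c_{\mu}$ by definition of the infimum; hence $E_{\mu}(tu)=c_{\mu}$ and $tu$ is a minimizer. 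To obtain a nonnegative one I would repeat the rescaling for $\left\vert tu\right\vert$: by Remark \ref{nochange} neither seminorm increases under taking the absolute value while the sup norm is unchanged, so $\left\vert tu\right\vert$ still satisfies the admissibility condition of (\ref{condneh}), its Nehari multiple $\tau\left\vert tu\right\vert$ again has $\tau\leq1$ by the same computation, and (\ref{1/q-1/p}) gives $E_{\mu}(\tau\left\vert tu\right\vert )\leq E_{\mu}(tu)=c_{\mu}.$ Thus $u_{\mu}:=\tau\left\vert tu\right\vert \geq0$ is the desired minimizer of $E_{\mu}$ on $\mathcal{N}_{\mu}.$

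The delicate point is the second step: the weak limit need not itself lie on $\mathcal{N}_{\mu},$ so a rescaling is unavoidable, and it is harmless only because the sup norm converges strongly (through compactness) while the seminorms are merely weakly lower semicontinuous. Precisely this asymmetry produces $\left[  u\right]  _{\alpha,p}^{p}+\left[  u\right]  _{\beta,q}^{q}\leq\mu\left\Vert u\right\Vert _{\infty}^{p}$ and, together with $p>q,$ forces the fiber parameter to satisfy $t\leq1,$ which is what prevents the energy from jumping strictly above $c_{\mu}$ in the limit.
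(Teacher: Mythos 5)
Your proposal is correct and follows essentially the same route as the paper: a minimizing sequence on $\mathcal{N}_{\mu}$, boundedness via (\ref{1/q-1/p}) and (\ref{aux1}), compactness giving uniform convergence of the sup norms alongside weak lower semicontinuity of both seminorms, the resulting inequality $\left[  u\right]  _{\alpha,p}^{p}+\left[  u\right]  _{\beta,q}^{q}\leq\mu\left\Vert u\right\Vert _{\infty}^{p}$ forcing the Nehari rescaling parameter to be at most $1$, and Remark \ref{nochange} for nonnegativity. The only cosmetic difference is that the paper pushes the energy chain one step further to conclude that the rescaling parameter actually equals $1$ (so the weak limit itself lies on $\mathcal{N}_{\mu}$), whereas you stop at $t\leq1$, which already suffices for the statement.
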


\begin{proof}
Let $\left\{  u_{n}\right\}  \in\mathcal{N}_{\mu}$ be a minimizing sequence:
\[
E_{\mu}(u_{n})=\left(  \frac{1}{q}-\frac{1}{p}\right)  \left[  u_{n}\right]
_{\beta,q}^{q}\rightarrow m_{\mu}:=\inf\left\{  E_{\mu}(u):u\in\mathcal{N}%
_{\mu}\right\}  .
\]

Taking (\ref{aux1}) into account and using compactness arguments, we can
assume that $u_{n}$ converges to a function $u_{\mu}\in W_{0}^{\alpha
,p}(\Omega)$ uniformly in $C(\overline{\Omega})$ and weakly in both Sobolev
spaces $W_{0}^{\alpha,p}(\Omega)$ and $W_{0}^{\beta,q}(\Omega).$ Of course,
$u_{\mu}\not \equiv 0$ since
\[
\left\Vert u_{\mu}\right\Vert _{\infty}>\left(  \frac{\lambda_{\beta,q}}{\mu
}\right)  ^{\frac{1}{p-q}}>0.
\]
Hence,%
\[
\left[  u_{\mu}\right]  _{\alpha,p}^{p}<\left[  u_{\mu}\right]  _{\alpha
,p}^{p}+\left[  u_{\mu}\right]  _{\beta,q}^{q}\leq\liminf_{n}\left(  \left[
u_{n}\right]  _{\alpha,p}^{p}+\left[  u_{n}\right]  _{\beta,q}^{q}\right)
=\mu\liminf_{n}\left\Vert u_{n}\right\Vert _{\infty}^{p}=\mu\left\Vert u_{\mu
}\right\Vert _{\infty}^{p},
\]
thus implying that $\theta u_{\mu}\in\mathcal{N}_{\mu},$ where%
\[
\theta:=\left(  \frac{\left[  u_{\mu}\right]  _{\beta,q}^{q}}{\mu\left\Vert
u_{\mu}\right\Vert _{\infty}^{p}-\left[  u_{\mu}\right]  _{\alpha,p}^{p}%
}\right)  ^{\frac{1}{p-q}}\leq1.
\]

Consequently,%
\begin{align*}
m_{\mu}  &  \leq E_{\mu}(\theta u_{\mu})\\
&  =\theta^{q}\left(  \frac{1}{q}-\frac{1}{p}\right)  \left[  u_{\mu}\right]
_{\beta,q}^{q}\leq\left(  \frac{1}{q}-\frac{1}{p}\right)  \liminf_{n}\left[
u_{n}\right]  _{\beta,q}^{q}=\lim_{n}E_{\mu}(u_{n})=m_{\mu},
\end{align*}
that is, $\theta=1,$ $u_{\mu}\in\mathcal{N}_{\mu}$ and $m_{\mu}=E_{\mu}%
(u_{\mu}).$

Remark \ref{nochange} and (\ref{condneh}) show that $\left\vert u_{\mu
}\right\vert \in\mathcal{N}_{\mu}$ and $E_{\mu}(\left\vert u_{\mu}\right\vert
)\leq E_{\mu}(u_{\mu}).$ Thus, we can assume that $u_{\mu}\geq0$ in $\Omega.$
\end{proof}

\begin{proposition}
Suppose that $\alpha,\beta,p$ and $q$ satisfy (\ref{H1a}). If $u\in
\mathcal{N}_{\mu}$ is such that
\[
E_{\mu}(u)\leq E_{\mu}(v),\quad\forall\,v\in\mathcal{N}_{\mu},
\]
then $u$ is a weak solution of (\ref{pq}).
\end{proposition}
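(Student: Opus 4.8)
The plan is to mimic the argument of Proposition \ref{isweakb}, producing the two-sided inequality that triggers Remark \ref{gateaux1}, but with the essential modification that admissible variations must be projected back onto $\mathcal{N}_{\mu}$, since here $E_{\mu}$ has no free minimum (recall \eqref{Ee}, which shows $E_{\mu}$ is unbounded below).

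First I would fix $\varphi\in X(\Omega)$ and write $w_{\epsilon}:=u+\epsilon\varphi$. Because $u\in\mathcal{N}_{\mu}$ and $[u]_{\beta,q}^{q}>0$, the strict inequality $[u]_{\alpha,p}^{p}<\mu\|u\|_{\infty}^{p}$ holds; by continuity of $\epsilon\mapsto[w_{\epsilon}]_{\alpha,p}^{p}$ and $\epsilon\mapsto\|w_{\epsilon}\|_{\infty}^{p}$ it persists for $|\epsilon|$ small, so \eqref{condneh} supplies a projection factor $t(\epsilon)>0$ with $t(\epsilon)w_{\epsilon}\in\mathcal{N}_{\mu}$ and $t(0)=1$. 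Setting $\gamma(\epsilon):=E_{\mu}(t(\epsilon)w_{\epsilon})$, minimality of $u$ on the Nehari set gives $\gamma(\epsilon)\geq m_{\mu}=\gamma(0)$, so $\gamma$ attains a local minimum at $\epsilon=0$ and hence $\gamma'(0^{+})\geq0$, once this one-sided derivative is shown to exist.

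The core computation is $\gamma'(0^{+})$. Writing $\Psi(t,\epsilon):=E_{\mu}(tw_{\epsilon})=\frac{t^{p}}{p}[w_{\epsilon}]_{\alpha,p}^{p}+\frac{t^{q}}{q}[w_{\epsilon}]_{\beta,q}^{q}-\frac{\mu t^{p}}{p}\|w_{\epsilon}\|_{\infty}^{p}$, the two seminorm terms are $C^{1}$ in $\epsilon$, while the sup-norm term is only right/left differentiable, with one-sided derivative governed by Lemma \ref{gateaux}; consequently $t(\epsilon)$, being the explicit expression in \eqref{condneh}, inherits one-sided derivatives at $0$. Since $t(\epsilon)$ is exactly the (unique) maximiser of $t\mapsto\Psi(t,\epsilon)$, the first-order condition $\partial_{t}\Psi(t(\epsilon),\epsilon)=0$ holds, so in the one-sided chain rule the contribution of $t'(0^{+})$ drops out and $\gamma'(0^{+})=\partial_{\epsilon}\Psi(1,0^{+})$. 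Evaluating the latter with the G\^{a}teaux derivatives of the seminorms and Lemma \ref{gateaux} for the sup-norm term yields $\gamma'(0^{+})=L-\mu\max\{|u(x)|^{p-2}u(x)\varphi(x):x\in\Gamma_{u}\}$, where $L:=\langle(-\Delta_{p})^{\alpha}u,\varphi\rangle_{\alpha,p}+\langle(-\Delta_{q})^{\beta}u,\varphi\rangle_{\beta,q}$.

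The inequality $\gamma'(0^{+})\geq0$ then reads $\mu\max\{|u(x)|^{p-2}u(x)\varphi(x):x\in\Gamma_{u}\}\leq L$; running the same argument with $\varphi$ replaced by $-\varphi$ (whose smooth part contributes $-L$) gives $L\leq\mu\min\{|u(x)|^{p-2}u(x)\varphi(x):x\in\Gamma_{u}\}$. Dividing by $\mu>0$ and applying Remark \ref{gateaux1} (with constant $L/\mu$) forces $\Gamma_{u}$ to be a singleton $\{x_{u}\}$ and $L=\mu|u(x_{u})|^{p-2}u(x_{u})\varphi(x_{u})$, which is precisely the weak formulation of Definition \ref{weak}. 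I expect the main obstacle to be the rigorous justification of this envelope step: establishing that $t(\epsilon)$ admits a one-sided derivative at $0$ and that the stationarity $\partial_{t}\Psi=0$ legitimately cancels the $t'(0^{+})$-term, so that the non-smoothness of $\|\cdot\|_{\infty}^{p}$ enters $\gamma'(0^{+})$ only through the clean one-sided limit supplied by Lemma \ref{gateaux}.
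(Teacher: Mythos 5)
Your proposal is correct and follows essentially the same route as the paper: project the perturbed function $u+s\varphi$ back onto $\mathcal{N}_{\mu}$ via the explicit factor from (\ref{condneh}), use minimality to get $\gamma'(0^{+})\geq 0$, and let the Nehari identity $[u]_{\alpha,p}^{p}+[u]_{\beta,q}^{q}=\mu\|u\|_{\infty}^{p}$ kill the $\tau'(0^{+})$ contribution before invoking Lemma \ref{gateaux} and Remark \ref{gateaux1}. The "envelope step" you flag as the main obstacle is exactly the cancellation the paper performs (implicitly) when it says ``observing that $\tau(0^{+})=1$ and $u\in\mathcal{N}_{\mu}$,'' and your justification of it is sound.
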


\begin{proof}
Let $\varphi\in X(\Omega)$ be fixed. Since $u\in\mathcal{N}_{\mu}$ we have
$\mu\left\Vert u\right\Vert _{\infty}^{p}-\left[  u\right]  _{\alpha,p}%
^{p}=\left[  u\right]  _{\beta,q}^{q}>0.$ Thus, by continuity there exists
$\epsilon>0$ such that
\[
\mu\left\Vert u+s\varphi\right\Vert _{\infty}^{p}-\left[  u+s\varphi\right]
_{\alpha,p}^{p}>0,\quad\forall\,s\in(-\epsilon,\epsilon).
\]
It follows that
\[
\tau(s)(u+s\varphi)\in N_{\mu},\quad\forall\,s\in(-\epsilon,\epsilon)
\]
where%
\[
\tau(s):=\left(  \frac{\left[  u+s\varphi\right]  _{\beta,q}^{q}}%
{\mu\left\Vert u+s\varphi\right\Vert _{\infty}^{p}-\left[  u+s\varphi\right]
_{\alpha,p}^{p}}\right)  ^{\frac{1}{p-q}},\quad s\in(-\epsilon,\epsilon).
\]

Therefore, the function
\begin{align*}
\gamma(s)  &  :=E_{\mu}\left(  \tau(s)u+s\varphi\right) \\
&  =\frac{\tau(s)^{p}}{p}\left[  u+s\varphi\right]  _{\alpha,p}^{p}+\frac
{\tau(s)^{q}}{q}\left[  u+s\varphi\right]  _{\beta,q}^{q}-\mu\frac{\tau
(s)^{p}}{p}\left\Vert u+s\varphi\right\Vert _{\infty}^{p},\quad s\in
(-\epsilon,\epsilon)
\end{align*}
assumes a minimum value at $s=0.$ This implies that
\begin{equation}
\gamma^{\prime}(0^{+}):=\lim_{s\rightarrow0^{+}}\frac{\gamma(s)-\gamma(0)}%
{s}\geq0. \label{gamin}%
\end{equation}

Using Lemma \ref{gateaux} and observing that $\tau(0^{+})=1$ and
$u\in\mathcal{N}_{\mu}$ we compute%
\[
\gamma^{\prime}(0^{+})=\big\langle \left(  -\Delta_{p}\right)  ^{\alpha
}u,\varphi\big\rangle _{\alpha,p}+\big\langle \left(  -\Delta_{q}\right)
^{\beta}u,\varphi\big\rangle _{\beta,q}-\mu\max\left\{  \left\vert
u(x)\right\vert ^{p-2}u(x)\varphi(x):x\in\Gamma_{u}\right\}  .
\]

Hence, (\ref{gamin}) yields,
\[
\mu\max\left\{  \left\vert u(x)\right\vert ^{p-2}u(x)\varphi(x):x\in\Gamma
_{u}\right\}  \leq\big\langle \left(  -\Delta_{p}\right)  ^{\alpha}%
u,\varphi\big\rangle _{\alpha,p}+\big\langle \left(  -\Delta_{q}\right)
^{\beta}u,\varphi\big\rangle _{\beta,q}.
\]

Replacing $\varphi$ with $-\varphi$ we obtain
\[
\big\langle \left(  -\Delta_{p}\right)  ^{\alpha}u,\varphi\big\rangle
_{\alpha,p}+\big\langle \left(  -\Delta_{q}\right)  ^{\beta}u,\varphi
\big\rangle _{\beta,q}\leq\mu\min\left\{  \left\vert u(x)\right\vert
^{p-2}u(x)\varphi(x):x\in\Gamma_{u}\right\}  .
\]
Hence, according to Remark \ref{gateaux1}, $\Gamma_{u}=\left\{  x_{u}\right\}
$ and%
\[
\big\langle \left(  -\Delta_{p}\right)  ^{\alpha}u,\varphi\big\rangle
_{\alpha,p}+\big\langle \left(  -\Delta_{q}\right)  ^{\beta}u,\varphi
\big\rangle _{\beta,q}=\mu\left\vert u(x_{u})\right\vert ^{p-2}u(x_{u}%
)\varphi(x_{u}).
\]

\end{proof}

We gather the results above in the following theorem.

\begin{theorem}
\label{teo1}Suppose that $\alpha,\beta,p$ and $q$ satisfy either (\ref{H1b})
or (\ref{H1a}), and that $\mu$ satisfies (\ref{lamb}). Then (\ref{pq}) has at
least one nonnegative least energy solution $u_{\mu}\in X(\Omega
)\setminus\left\{  0\right\}  .$
\end{theorem}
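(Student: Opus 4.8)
The plan is to recognize that Theorem~\ref{teo1} requires no new argument: it merely assembles the conclusions of the four preceding propositions of this section according to which of the two mutually exclusive configurations~(\ref{H1b}) or~(\ref{H1a}) is in force. In each case the strategy is the same two-step one: first produce a nonnegative, nontrivial minimizer of $E_{\mu}$ over the appropriate set, and then invoke the corresponding ``minimizer is a weak solution'' proposition to conclude that this minimizer solves~(\ref{pq}) in the sense of Definition~\ref{weak}. Since by construction it minimizes the energy, it qualifies as a least energy solution.

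First I would treat the case~(\ref{H1b}), in which $X(\Omega)=W_{0}^{\beta,q}(\Omega)$ and $E_{\mu}$ is coercive and bounded from below on all of $X(\Omega)$. The first proposition of this section yields a nonnegative $u_{\mu}\in X(\Omega)\setminus\{0\}$ that globally minimizes $E_{\mu}$, and Proposition~\ref{isweakb} then guarantees that $u_{\mu}$ is a weak solution of~(\ref{pq}). As it attains the global minimum of the energy, $u_{\mu}$ is a least energy solution, and membership in $X(\Omega)\setminus\{0\}$ is immediate from the construction.

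Next I would treat the case~(\ref{H1a}), in which $X(\Omega)=W_{0}^{\alpha,p}(\Omega)$ but $E_{\mu}$ is unbounded from below on $X(\Omega)$ (by the identity~(\ref{Ee})), so the Nehari-type constraint set $\mathcal{N}_{\mu}$ must be used instead. The third proposition produces a nonnegative $u_{\mu}\in W_{0}^{\alpha,p}(\Omega)\setminus\{0\}$ minimizing $E_{\mu}$ over $\mathcal{N}_{\mu}$, and the fourth proposition shows that any such constrained minimizer is a weak solution of~(\ref{pq}). Here $u_{\mu}\in\mathcal{N}_{\mu}\subset W_{0}^{\alpha,p}(\Omega)=X(\Omega)$ and $u_{\mu}\not\equiv 0$, so again it is a least energy solution lying in $X(\Omega)\setminus\{0\}$.

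Because every ingredient is already in hand, I do not anticipate any genuine obstacle; the only point deserving care is to keep the two notions of \emph{least energy solution} straight---unconstrained minimization over $X(\Omega)$ under~(\ref{H1b}) versus constrained minimization over $\mathcal{N}_{\mu}$ under~(\ref{H1a})---and to observe that in both regimes $X(\Omega)$ is precisely the space in which the produced minimizer lives, so that the stated conclusion $u_{\mu}\in X(\Omega)\setminus\{0\}$ holds verbatim in either case.
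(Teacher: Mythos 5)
Your proposal is correct and matches the paper exactly: the authors introduce Theorem \ref{teo1} with the phrase ``We gather the results above in the following theorem,'' i.e.\ it is precisely the assembly of the two existence propositions with the two ``minimizer is a weak solution'' propositions, split according to whether (\ref{H1b}) or (\ref{H1a}) holds. Your care in distinguishing the unconstrained minimization over $X(\Omega)$ from the constrained minimization over $\mathcal{N}_{\mu}$ is exactly the right point to watch, and nothing further is needed.
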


We remark that $u_{\mu}\in X(\Omega)\setminus\left\{  0\right\}  $ given by
Theorem \ref{teo1} is a nonnegative weak solution of the fractional
harmonic-type equation
\begin{equation}
\left[  \left(  -\Delta_{p}\right)  ^{\alpha}+\left(  -\Delta_{q}\right)
^{\beta}\right]  u=0 \label{harpq}%
\end{equation}
in the punctured domain $\Omega\setminus\left\{  x_{u}\right\}  ,$ since
\begin{equation}
\big\langle \left(  -\Delta_{p}\right)  ^{\alpha}u_{\mu},\varphi
\big\rangle _{\alpha,p}+\big\langle \left(  -\Delta_{q}\right)  ^{\beta}%
u_{\mu},\varphi\big\rangle _{\beta,q}=0\quad\forall\,\varphi\in C_{c}^{\infty
}(\Omega\setminus\left\{  x_{u_{\mu}}\right\}  ). \label{harpq1}%
\end{equation}

Consequently, if $p>\frac{1}{1-\alpha}$ and $q>\frac{1}{1-\beta}$ (see Remark
\ref{+c}) one can adapt the arguments developed in \cite[Lemma 3.9]{FPL} and
\cite[Proposition 11]{LL} to verify that $u_{\mu}$ is also a viscosity
solution of
\begin{equation}
\mathcal{L}_{\alpha,p}u+\mathcal{L}_{\beta,q}u=0\quad\mathrm{in}%
\,\Omega\setminus\left\{  x_{u_{\mu}}\right\}  , \label{vispq}%
\end{equation}
(recall the definition of $\mathcal{L}_{s,m}$ in (\ref{Lsm})). This means that
$u_{\mu}$ is both a supersolution and a subsolution of (\ref{vispq}), that is,
$u_{\mu}$ meets the (respective) requirements:

\begin{itemize}
\item $(\mathcal{L}_{\alpha,p}\varphi)(x_{0})+(\mathcal{L}_{\beta,q}%
\varphi)(x_{0})\leq0$ for every pair $\left(  x_{0},\varphi\right)  \in
(\Omega\setminus\left\{  x_{u_{\mu}}\right\}  )\times C_{c}^{1}(\mathbb{R}%
^{N})$ satisfying
\[
\varphi(x_{0})=u_{\mu}(x_{0})\quad\mathrm{and}\quad\varphi(x)\leq u_{\mu
}(x)\quad\forall\,x\in\mathbb{R}^{N}\setminus\left\{  x_{u_{\mu}}%
,x_{0}\right\}  ,
\]

\item $(\mathcal{L}_{\alpha,p}\varphi)(x_{0})+(\mathcal{L}_{\beta,q}%
\varphi)(x_{0})\geq0$ for every pair $\left(  x_{0},\varphi\right)  \in
D\times C_{c}^{1}(\mathbb{R}^{N})$ satisfying%
\[
\varphi(x_{0})=u_{\mu}(x_{0})\quad\mathrm{and}\quad\varphi(x)\geq u_{\mu
}(x)\quad\forall\,x\in\mathbb{R}^{N}\setminus\left\{  x_{u_{\mu}}%
,x_{0}\right\}  .
\]

\end{itemize}

\begin{remark}
\label{+c}As observed in \cite{LL}, if $D$ is a bounded domain of
$\mathbb{R}^{N},$ $m>\frac{1}{s-1}$ and $\varphi\in C_{c}^{1}(\mathbb{R}%
^{N}),$ then the function $\mathcal{L}_{s,m}\varphi$ given by (\ref{Lsm}) is
well defined and continuous at each point $x\in D.$ Obviously, the same holds
for $\psi=\varphi+k,$ where $k$ is an arbitrary constant and $\varphi\in
C_{c}^{1}(\mathbb{R}^{N}),$ since
\[
\left(  \mathcal{L}_{s,m}\psi\right)  (x)=(\mathcal{L}_{s,m}\varphi)(x).
\]
Moreover, it is simple to check that $u_{\mu}$ fulfills both requirements
above even for test functions of the form $\psi=\varphi+k.$
\end{remark}

It is interesting to notice that $u_{\mu}>0$ in $\Omega\setminus\left\{
x_{u_{\mu}}\right\}  $ as consequence of $u_{\mu}$ being a supersolution of
(\ref{vispq}). The argument comes from \cite[Lemma 12]{LL}: by supposing that
$u_{\mu}(x_{0})=0$ for some $x_{0}\in\Omega\setminus\left\{  x_{u_{\mu}%
}\right\}  $ and noting that $0\not \equiv u\geq0,$ we can find a nonnegative
and nontrivial test function $\varphi\in C_{c}^{1}(\mathbb{R}^{N})$
satisfying
\[
\varphi(x_{0})=0\leq\varphi(x)\leq u_{\mu}(x)\quad\forall\,x\in\mathbb{R}%
^{N}\setminus\left\{  x_{u},x_{0}\right\}  .
\]
Hence,
\[
0\leq\int_{\mathbb{R}^{N}}\dfrac{2\left\vert \varphi(y)\right\vert ^{p-1}%
}{\left\vert x_{0}-y\right\vert ^{N+\alpha p}}\mathrm{d}y=\int_{\mathbb{R}%
^{N}}\dfrac{2\left\vert \varphi(y)\right\vert ^{p-2}\varphi(y)}{\left\vert
x_{0}-y\right\vert ^{N+\alpha p}}\mathrm{d}y\leq(\mathcal{L}_{\alpha,p}%
\varphi)(x_{0})+(\mathcal{L}_{\beta,q}\varphi)(x_{0})\leq0,
\]
which leads to the contradiction $\varphi\equiv0.$

\section{Asymptotic behavior as p goes to infinity \label{Sec2}}

Let $D$ be a bounded smooth (at least Lipschitz) domain of $\mathbb{R}^{N}.$
We recall that $\left(  C_{0}^{0,s}(\overline{D}),\left\vert \cdot\right\vert
_{s}\right)  $ is a Banach space, but
\[
C_{0}^{0,s}(\overline{D})\not =\overline{C_{c}^{\infty}(D)}^{\left\vert
\cdot\right\vert _{s}}.
\]
That is, $C_{c}^{\infty}(D)$ is not $\left\vert \cdot\right\vert _{s}$-dense
in $C_{0}^{0,s}(\overline{D}).$

However, we have the following lemma that follows from \cite[Lemma 9]{EPS}.

\begin{lemma}
\label{dens}Let $v\in C_{0}^{0,s}(\overline{D}).$ There exists a sequence
$\left\{  v_{k}\right\}  \subset C_{c}^{\infty}(D)$ such that%
\[
\lim_{k\rightarrow\infty}\left\Vert v_{k}\right\Vert _{\infty}=\left\Vert
v\right\Vert _{\infty}\quad\mathrm{and}\quad\limsup_{k\rightarrow\infty
}\left\vert v_{k}\right\vert _{s}\leq\left\vert v\right\vert _{s}.
\]

\end{lemma}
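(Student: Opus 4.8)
The plan is to reach $v$ by a two-stage approximation: first push $v$ strictly inside $D$ by a truncation that leaves the $s$-H\"{o}lder seminorm non-increasing, and then mollify the resulting compactly supported function to smooth it while keeping both $\left\Vert \cdot\right\Vert_\infty$ and $\left\vert\cdot\right\vert_s$ under control. A final diagonal choice of the two parameters then produces the desired sequence $\{v_k\}\subset C_c^\infty(D)$. Since $C_c^\infty(D)$ fails to be $\left\vert\cdot\right\vert_s$-dense in $C_0^{0,s}(\overline D)$ (as noted just above the statement), no equality of seminorms can be expected, which is exactly why only the one-sided bound is asserted.

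For the truncation stage, I would fix $\delta>0$ and set $v^\delta:=T_\delta\circ v$, where $T_\delta(t):=\operatorname{sign}(t)\max\{|t|-\delta,0\}$ is the soft-thresholding map. Because $T_\delta$ is $1$-Lipschitz with $T_\delta(0)=0$, composition gives $\left\vert v^\delta\right\vert_s\le\left\vert v\right\vert_s$ and $\left\Vert v^\delta\right\Vert_\infty=\max\{\left\Vert v\right\Vert_\infty-\delta,0\}$, so $\left\Vert v^\delta\right\Vert_\infty\to\left\Vert v\right\Vert_\infty$ as $\delta\to0^+$. The decisive gain is that $v^\delta$ vanishes on $\{|v|\le\delta\}$: since $v\in C_0(\overline D)$ vanishes on $\partial D$, the set $\{|v|\ge\delta\}$ is a compact subset of $D$, so $v^\delta$ has compact support in $D$. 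Extending $v^\delta$ by zero to $\mathbb R^N$ does not enlarge its seminorm, for if $x\in D$ and $y\notin\overline D$ the segment $[x,y]$ meets $\partial D$ at a point $z$ with $v^\delta(z)=0$ and $|x-z|\le|x-y|$, whence $|v^\delta(x)-v^\delta(y)|=|v^\delta(x)-v^\delta(z)|\le\left\vert v^\delta\right\vert_s|x-y|^s$; the remaining pairs are trivial.

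For the mollification stage, I would convolve with a standard nonnegative mollifier $\rho_\epsilon$ of unit mass. For $\epsilon$ smaller than the distance from $\operatorname{supp}v^\delta$ to $\partial D$, the function $v^\delta*\rho_\epsilon$ lies in $C_c^\infty(D)$. The triangle inequality under the integral yields $\left\vert v^\delta*\rho_\epsilon\right\vert_s\le\left\vert v^\delta\right\vert_s$ and $\left\Vert v^\delta*\rho_\epsilon\right\Vert_\infty\le\left\Vert v^\delta\right\Vert_\infty$, while uniform continuity of $v^\delta$ gives $v^\delta*\rho_\epsilon\to v^\delta$ uniformly, hence $\left\Vert v^\delta*\rho_\epsilon\right\Vert_\infty\to\left\Vert v^\delta\right\Vert_\infty$ as $\epsilon\to0^+$. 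Combining the two stages, I would pick $\delta_k\to0^+$ and then $\epsilon_k>0$ small enough that $v_k:=v^{\delta_k}*\rho_{\epsilon_k}\in C_c^\infty(D)$ and $\bigl|\,\left\Vert v_k\right\Vert_\infty-\left\Vert v^{\delta_k}\right\Vert_\infty\bigr|<1/k$. Then $\left\Vert v_k\right\Vert_\infty\to\left\Vert v\right\Vert_\infty$, and since $\left\vert v_k\right\vert_s\le\left\vert v^{\delta_k}\right\vert_s\le\left\vert v\right\vert_s$ for every $k$, one even obtains $\limsup_k\left\vert v_k\right\vert_s\le\left\vert v\right\vert_s$.

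The part deserving the most care is the seminorm monotonicity across \emph{both} operations together with the zero-extension step, since it is precisely here that the hypothesis $v\in C_0(\overline D)$ (vanishing on $\partial D$) is used: it is what makes the truncated function genuinely compactly supported in $D$ and prevents the seminorm from jumping upward when the function is viewed on all of $\mathbb R^N$. Everything else is a routine diagonal argument. This is exactly the mechanism of \cite[Lemma 9]{EPS}, so one could alternatively invoke that result directly.
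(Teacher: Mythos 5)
Your argument is correct. Note that the paper does not actually prove this lemma: it only records it as a consequence of \cite[Lemma 9]{EPS}, so your soft-thresholding-plus-mollification argument supplies, in full, the standard mechanism behind that citation rather than taking a genuinely different route. The two points you single out are indeed the ones that carry the proof: the truncation $T_\delta\circ v$ is compactly supported in $D$ precisely because $v\in C_{0}(\overline{D})$ vanishes on $\partial D$, and the zero extension must be shown not to increase $\left\vert \cdot\right\vert _{s}$ \emph{before} mollifying, since for $x$ near $\partial D$ the convolution $(v^{\delta}\ast\rho_{\epsilon})(x)$ samples values of $v^{\delta}$ at points outside $\overline{D}$, so the Hölder bound is needed on all of $\mathbb{R}^{N}$ and not merely on $\overline{D}$ as in the definition (\ref{bholder}). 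The only blemish is the aside that no equality of seminorms "can be expected" because $C_{c}^{\infty}(D)$ is not $\left\vert \cdot\right\vert _{s}$-dense: non-density only precludes $\left\vert v_{k}-v\right\vert _{s}\rightarrow0$, not convergence of the seminorms themselves (indeed your construction gives $\liminf_{k}\left\vert v_{k}\right\vert _{s}\geq R^{-s}\left\Vert v\right\Vert _{\infty}$ type lower bounds only through Proposition \ref{Rmin}, not automatically $\left\vert v_{k}\right\vert _{s}\rightarrow\left\vert v\right\vert _{s}$); this remark is harmless since the lemma asserts only the one-sided bound, which you prove.
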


Now, returning to our bounded domain $\Omega,$ let%
\[
R:=\max_{x\in\overline{\Omega}}\operatorname{dist}(x,\mathbb{R}^{N}%
\setminus\Omega).
\]
It is the inradius of $%
\Omega
$: the radius of the largest ball inscribed in $%
\Omega
.$

Let $B_{R}(x_{0})$ be a ball centered at $x_{0}\in\Omega$ with radius $R$ and
let $\phi_{R}:\overline{B_{R}(x_{0})}\rightarrow\lbrack0,R]$ be the distance
function to the boundary $\partial B_{R}(x_{0}),$ that is,
\[
\phi_{R}(x):=R-\left\vert x-x_{0}\right\vert .
\]

It is simple to verify that $\phi_{R}\in C_{0}^{0,s}(\overline{B_{R}(x_{0}%
)}),$ for every $s\in(0,1],$ with
\begin{equation}
\left\Vert \phi_{R}\right\Vert _{\infty}=R\quad\mathrm{and}\quad\left\vert
\phi_{R}\right\vert _{s}=R^{1-s}. \label{sPhi}%
\end{equation}
Moreover, it is clear that $\phi_{R}$ extended by zero outside $B_{R}(x_{0})$
belongs to $C_{0}^{0,s}(\overline{\Omega})$ and its $s$-H\"{o}lder seminorm is
preserved. In particular, such an extension is a Lipschitz function vanishing
outside $\Omega.$ Hence,
\[
\phi_{R}\in W_{0}^{1,m}(\Omega)\hookrightarrow W_{0}^{s,m}(\Omega)\quad
\forall\,s\in(0,1)\quad\mathrm{and}\quad m\geq1.
\]
(Note that we are considering $\Omega$ at least a Lipschitz domain.)
Consequently, we can apply \cite[Lemma 7]{EPS} to conclude that%
\begin{equation}
\lim_{m\rightarrow\infty}\left[  \phi_{R}\right]  _{s,m}=\left\vert \phi
_{R}\right\vert _{s}=R^{1-s},\quad\mathrm{for}\,\mathrm{each}\,s\in(0,1).
\label{s-holder}%
\end{equation}

The proof of the following proposition is adapted from \cite{LL} where
(\ref{aux2c}) is proved.

\begin{proposition}
\label{Rmin}For each $s\in(0,1]$ one has%
\begin{equation}
\lim_{m\rightarrow\infty}\sqrt[m]{\lambda_{s,m}}=R^{-s}=\frac{\left\vert
\phi_{R}\right\vert _{s}}{\left\Vert \phi_{R}\right\Vert _{\infty}}=\min_{v\in
C_{0}^{0,s}(\overline{\Omega})\setminus\left\{  0\right\}  }\frac{\left\vert
v\right\vert _{s}}{\left\Vert v\right\Vert _{\infty}}. \label{linds}%
\end{equation}

\end{proposition}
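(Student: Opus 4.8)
The plan is to prove the chain (\ref{linds}) in three stages: first the two equalities on the right (which also yields the minimum characterization), then the upper estimate $\limsup_{m}\sqrt[m]{\lambda_{s,m}}\le R^{-s}$, and finally the matching lower estimate.

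\textbf{Equalities and the minimum characterization.} By (\ref{sPhi}), $|\phi_R|_s/\|\phi_R\|_\infty = R^{1-s}/R = R^{-s}$, which is the middle equality. To see that $\phi_R$ realizes the minimum, I would show $|v|_s \ge R^{-s}\|v\|_\infty$ for every $v\in C_0^{0,s}(\overline{\Omega})\setminus\{0\}$. Choose $x_0\in\Omega$ with $|v(x_0)|=\|v\|_\infty$ and a nearest boundary point $y_0\in\partial\Omega$, so that $|x_0-y_0|=\operatorname{dist}(x_0,\partial\Omega)\le R$; since $v(y_0)=0$,
\[
|v|_s \ge \frac{|v(x_0)-v(y_0)|}{|x_0-y_0|^s} = \frac{\|v\|_\infty}{|x_0-y_0|^s} \ge R^{-s}\|v\|_\infty .
\]
As $\phi_R$ attains equality here, the minimum equals $R^{-s}$ and the last equality in (\ref{linds}) follows.

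\textbf{Upper bound.} Since $\phi_R\in W_0^{s,m}(\Omega)$, it is admissible in the infimum defining $\lambda_{s,m}$, whence $\sqrt[m]{\lambda_{s,m}}\le [\phi_R]_{s,m}/\|\phi_R\|_\infty$. Letting $m\to\infty$ and invoking (\ref{s-holder}) together with $\|\phi_R\|_\infty=R$ gives $\limsup_{m}\sqrt[m]{\lambda_{s,m}} \le R^{1-s}/R = R^{-s}$.

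\textbf{Lower bound (the main obstacle).} Let $u_m$ be a minimizer of $\lambda_{s,m}$ (which is attained, as noted after (\ref{cpt})), normalized so that $\|u_m\|_\infty=1$; then $[u_m]_{s,m}=\sqrt[m]{\lambda_{s,m}}$, which is bounded for large $m$ by the previous step. For $m$ large the Morrey inequality (\ref{Morrey}) holds with a constant $C$ independent of $m$, so $\{u_m\}$ is uniformly bounded and equicontinuous; by Arzel\`a--Ascoli a subsequence converges uniformly on $\overline{\Omega}$ to some $u_\infty\in C_0(\overline{\Omega})$ with $\|u_\infty\|_\infty=1$. The crux is the lower-semicontinuity estimate $|u_\infty|_s\le\liminf_{m}[u_m]_{s,m}$, which I would obtain by fixing $a\neq b$ in $\overline{\Omega}$ and small $\rho>0$, setting $d_\rho:=|a-b|+2\rho$, and bounding
\[
[u_m]_{s,m}^m \ge \frac{1}{d_\rho^{\,N+sm}} \int_{B(b,\rho)}\int_{B(a,\rho)} |u_m(x)-u_m(y)|^m\,\mathrm{d}x\,\mathrm{d}y .
\]
Taking $m$-th roots and letting $m\to\infty$ (using the uniform convergence $u_m\to u_\infty$, the elementary fact $(\int\!\!\int_K f^m)^{1/m}\to\|f\|_{L^\infty(K)}$ on the fixed compact set $K=\overline{B(a,\rho)}\times\overline{B(b,\rho)}$, and $d_\rho^{\,N/m}\to 1$) yields $\liminf_{m}[u_m]_{s,m}\ge |u_\infty(a)-u_\infty(b)|/d_\rho^{\,s}$; letting $\rho\to 0^+$ and taking the supremum over $a\neq b$ gives $\liminf_{m}[u_m]_{s,m}\ge|u_\infty|_s$, so in particular $u_\infty\in C_0^{0,s}(\overline{\Omega})$. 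Applying the first step to $u_\infty\neq 0$,
\[
\liminf_{m}\sqrt[m]{\lambda_{s,m}} = \liminf_{m}[u_m]_{s,m} \ge |u_\infty|_s \ge R^{-s}\|u_\infty\|_\infty = R^{-s}.
\]
Combined with the upper bound this gives $\lim_{m}\sqrt[m]{\lambda_{s,m}}=R^{-s}$, completing (\ref{linds}). The delicate point is precisely the double-integral estimate and the interchange of the limits $m\to\infty$ and $\rho\to 0$, where the uniform convergence of $u_m$ and the finiteness of $|K|$ are what make the replacement of $u_m$ by $u_\infty$ and the passage to the H\"older quotient legitimate.
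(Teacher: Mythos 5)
Your proof is correct, and it shares the paper's overall skeleton (the two right-hand equalities first, then $\limsup_m\sqrt[m]{\lambda_{s,m}}\le R^{-s}$ via $\phi_R$, then the matching $\liminf$ via compactness of normalized minimizers) while diverging in two places. First, for the inequality $|v|_s\ge R^{-s}\|v\|_\infty$ you evaluate the H\"older quotient directly at a maximum point $x_0$ of $|v|$ and its nearest boundary point $y_0$, using $|x_0-y_0|=\operatorname{dist}(x_0,\partial\Omega)\le R$ and $v(y_0)=0$; the paper instead approximates $v$ by functions $v_k\in C_c^{\infty}(\Omega)$ via Lemma \ref{dens} and invokes the Lindgren--Lindqvist inequality (\ref{aux2c}), which is stated only for $C_c^{\infty}$ test functions. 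Your two-point argument is more elementary and self-contained (it needs neither the density lemma nor (\ref{aux2c})), and it is legitimate because the seminorm (\ref{bholder}) is a supremum over all of $\overline{\Omega}\times\overline{\Omega}$, so the pair $(x_0,y_0)$ is admissible. Second, for the semicontinuity $|u_\infty|_s\le\liminf_m[u_m]_{s,m}$ you localize the Gagliardo double integral on $B(a,\rho)\times B(b,\rho)$, bound the kernel below by $d_\rho^{-(N+sm)}$, and use $\bigl(\iint_K f^m\bigr)^{1/m}\to\|f\|_{L^\infty(K)}$ together with the uniform convergence of $u_m$; the paper instead lowers the exponent to a fixed $k$ by H\"older's inequality, passes to the limit in $m$ by Fatou's lemma, and only then sends $k\to\infty$. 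Both are standard routes to the same statement; the paper's version has the minor advantage of being reusable verbatim in Proposition \ref{betaHolder}, while yours avoids the double limit in $k$ and $m$ at the cost of the $\epsilon$-bookkeeping needed to replace $u_m$ by $u_\infty$ inside the $m$-th power before taking $m$-th roots, and of the final interchange $\rho\to0^+$, both of which you correctly identify and which do go through. The remaining steps (middle equality from (\ref{sPhi}), upper bound from (\ref{s-holder}), compactness from the $m$-uniform Morrey constant in (\ref{Morrey})) coincide with the paper's.
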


\begin{proof}
The second equality in (\ref{linds}) follows from (\ref{sPhi}). Since
$\phi_{R}\in C_{0}^{0,s}(\overline{\Omega})\setminus\left\{  0\right\}  $ to
prove the third equality in (\ref{linds}) it suffices to verify that
\begin{equation}
R^{-s}\leq\frac{\left\vert v\right\vert _{s}}{\left\Vert v\right\Vert
_{\infty}}\quad\forall\,v\in C_{0}^{0,s}(\overline{\Omega})\setminus\left\{
0\right\}  . \label{aux2f}%
\end{equation}

Let $v\in C_{0}^{0,s}(\overline{\Omega})\setminus\left\{  0\right\}  .$
According to Lemma \ref{dens}, there exists a sequence $\left\{
v_{k}\right\}  \subset C_{c}^{\infty}(\Omega)$ such that%
\[
\lim_{k\rightarrow\infty}\left\Vert v_{k}\right\Vert _{\infty}=\left\Vert
v\right\Vert _{\infty}\quad\mathrm{and}\quad\limsup_{k\rightarrow\infty
}\left\vert v_{k}\right\vert _{s}\leq\left\vert v\right\vert _{s}.
\]
Hence, (\ref{aux2c}) yields%
\[
R^{-s}\leq\limsup_{k\rightarrow\infty}\frac{\left\vert v_{k}\right\vert _{s}%
}{\left\Vert v_{k}\right\Vert _{\infty}}\leq\frac{\left\vert v\right\vert
_{s}}{\left\Vert v\right\Vert _{\infty}},
\]
concluding the proof of the third equality in (\ref{linds})

Now, let us prove that
\[
\lim_{m\rightarrow\infty}\sqrt[m]{\lambda_{s,m}}=R^{-s}.
\]

First, observing that%
\[
\sqrt[m]{\lambda_{s,m}}\leq\frac{\left[  \phi_{R}\right]  _{s,m}}{\left\Vert
\phi_{R}\right\Vert _{\infty}}%
\]
we obtain from (\ref{sPhi}) and (\ref{s-holder}) that%
\begin{equation}
\limsup_{m\rightarrow\infty}\sqrt[m]{\lambda_{s,m}}\leq\lim_{m\rightarrow
\infty}\frac{\left[  \phi_{R}\right]  _{s,m}}{\left\Vert \phi_{R}\right\Vert
_{\infty}}=\frac{\left\vert \phi_{R}\right\vert _{s}}{\left\Vert \phi
_{R}\right\Vert _{\infty}}=R^{-s}. \label{aux2d}%
\end{equation}

To prove that
\begin{equation}
R^{-s}\leq\liminf_{m\rightarrow\infty}\left(  \sqrt[m]{\lambda_{s,m}}\right)
\label{aux2e}%
\end{equation}
we fix $m_{0}>\frac{N}{s}$ and take, for each $m$ sufficiently large,
$u_{m}\in W_{0}^{s,m}(\Omega)$ such that $\left\Vert u_{m}\right\Vert
_{\infty}=1$ and%
\[
\lambda_{s,m}=\left[  u_{m}\right]  _{s,m}^{m}.
\]
According to (\ref{Morrey}), we have
\begin{align*}
\left\vert u_{m}\right\vert _{s-\frac{N}{m_{0}}}  &  =\sup_{(x,y)\not =%
(0,0)}\frac{\left\vert u_{m}(x)-u_{m}(y)\right\vert }{\left\vert
x-y\right\vert ^{s-\frac{N}{m_{0}}}}\\
&  =\sup_{(x,y)\not =(0,0)}\frac{\left\vert u_{m}(x)-u_{m}(y)\right\vert
}{\left\vert x-y\right\vert ^{s-\frac{N}{m}}}\left\vert x-y\right\vert
^{(\frac{N}{m_{0}}-\frac{N}{m})}\\
&  \leq(\operatorname{diam}(\Omega))^{(\frac{N}{m_{0}}-\frac{N}{m})}%
\sup_{(x,y)\not =(0,0)}\frac{\left\vert u_{m}(x)-u_{m}(y)\right\vert
}{\left\vert x-y\right\vert ^{s-\frac{N}{m}}}\\
&  \leq(\operatorname{diam}(\Omega))^{(\frac{N}{m_{0}}-\frac{N}{m})}C\left[
u_{m}\right]  _{s,m}=C(\operatorname{diam}(\Omega))^{(\frac{N}{m_{0}}-\frac
{N}{m})}\sqrt[m]{\lambda_{s,m}}.
\end{align*}

The estimate (\ref{aux2d}) implies that $\left\{  u_{m}\right\}  $ is
uniformly bounded in the H\"{o}lder space $C_{0}^{0,\beta-\frac{N}{m_{0}}%
}(\overline{\Omega}),$ which is compactly embedded in $C_{0}(\overline{\Omega
}).$ It follows that, up to a subsequence, $\left\{  u_{m}\right\}  $
converges uniformly in $\overline{\Omega}$ to a function $u\in C_{0}%
(\overline{\Omega})$ such that $\left\Vert u\right\Vert _{\infty}=1.$

For each $1<k<m,$ we have, by H\"{o}lder's inequality,%
\begin{align*}
\int_{\Omega}\int_{\Omega}\frac{\left\vert u_{m}(x)-u_{m}(y)\right\vert ^{k}%
}{\left\vert x-y\right\vert ^{(\frac{N}{m}+s)k}}\mathrm{d}x\mathrm{d}y  &
\leq\left\vert \Omega\right\vert ^{2(1-\frac{k}{m})}\left(  \int_{\Omega}%
\int_{\Omega}\frac{\left\vert u_{m}(x)-u_{m}(y)\right\vert ^{m}}{\left\vert
x-y\right\vert ^{N+sm}}\mathrm{d}x\mathrm{d}y\right)  ^{\frac{k}{m}}\\
&  \leq\left\vert \Omega\right\vert ^{2(1-\frac{k}{m})}\left(  \left[
u_{m}\right]  _{s,m}\right)  ^{k}=\left\vert \Omega\right\vert ^{2(1-\frac
{k}{m})}\left(  \sqrt[m]{\lambda_{s,m}}\right)  ^{k}.
\end{align*}
Making $m\rightarrow\infty$, using the uniform convergence, Fatou's Lemma and
the above estimate we obtain%
\begin{align*}
\int_{\Omega}\int_{\Omega}\frac{\left\vert u(x)-u(y)\right\vert ^{k}%
}{\left\vert x-y\right\vert ^{sk}}\mathrm{d}x\mathrm{d}y  &  \leq
\liminf_{m\rightarrow\infty}\int_{\Omega}\int_{\Omega}\frac{\left\vert
u_{m}(x)-u_{m}(y)\right\vert ^{k}}{\left\vert x-y\right\vert ^{\frac{Nk}%
{m}+sk}}\mathrm{d}x\mathrm{d}y\\
&  \leq\left\vert \Omega\right\vert ^{2}\liminf_{m\rightarrow\infty}\left(
\sqrt[m]{\lambda_{s,m}}\right)  ^{k}.
\end{align*}

Therefore,
\[
\left\vert u\right\vert _{s}=\lim_{k\rightarrow\infty}\left(  \int_{\Omega
}\int_{\Omega}\frac{\left\vert u(x)-u(y)\right\vert ^{k}}{\left\vert
x-y\right\vert ^{sk}}\mathrm{d}x\mathrm{d}y\right)  ^{\frac{1}{k}}\leq
\liminf_{m\rightarrow\infty}\left(  \sqrt[m]{\lambda_{s,m}}\right)  .
\]
Since $R^{-s}\leq\left\vert u\right\vert _{s}$ (according to (\ref{aux2f})) we
obtain (\ref{aux2e}).
\end{proof}

In the remaining of this section we fix $\alpha,\beta\in(0,1),$ with
$\alpha\not =\beta,$ and consider $q$ a continuous function of $p$ satisfying
\begin{equation}
\lim_{p\rightarrow\infty}\frac{q}{p}=:Q\in\left\{
\begin{array}
[c]{lll}%
(0,1) & \mathrm{if} & 0<\beta<\alpha<1\\
(1,\infty) & \mathrm{if} & 0<\alpha<\beta<1.
\end{array}
\right.  \label{q/p}%
\end{equation}
We maintain the notation $q$ instead of $q(p)$ to simplify the presentation.
Note that (\ref{q/p}) implies that
\[
\lim_{p\rightarrow\infty}q=\infty.
\]
Moreover, $q<p$ if $Q\in(0,1)$ and $p<q$ if $Q\in(1,\infty).$

Our goal is to study the asymptotic behavior, as $p\rightarrow\infty,$ of the
least energy solution $u_{p}$ of the problem%
\begin{equation}
\left\Vert u\right\Vert _{\infty}=u(x_{p})\quad\mathrm{and}\quad\left\{
\begin{array}
[c]{lll}%
\left[  \left(  -\Delta_{p}\right)  ^{\alpha}+\left(  -\Delta_{q}\right)
^{\beta}\right]  u=\mu_{p}\left\Vert u\right\Vert _{\infty}^{p-1}\delta
_{x_{p}} & \mathrm{in} & \Omega\\
u>0 & \mathrm{in} & \Omega\\
u=0 & \mathrm{in} & \mathbb{R}^{N}\setminus\Omega,
\end{array}
\right.  \label{pq(p)}%
\end{equation}
where $\mu_{p}$ satisfies%
\begin{equation}
\Lambda:=\lim_{p\rightarrow\infty}\sqrt[p]{\mu_{p}}>R^{-\alpha}, \label{Lamp}%
\end{equation}
with $R$ denoting the inradius of $\Omega.$

This condition guarantees that%
\begin{equation}
\mu_{p}>\lambda_{\alpha,p} \label{lambp}%
\end{equation}
for all $p$ sufficiently large, say $p>p_{0}$. Moreover, by taking a larger
$p_{0}$ one of the conditions (\ref{H1b}) or (\ref{H1a}) is fulfilled. So,
according to Theorem \ref{teo1}, for each $p>p_{0}$ the problem (\ref{pq(p)}%
)\ has at least one positive least energy solution
\[
u_{p}\in X_{p}(\Omega):=\left\{
\begin{array}
[c]{lll}%
W_{0}^{\alpha,p}(\Omega) & \mathrm{if} & 0<\beta<\alpha<1\\
W_{0}^{\beta,q}(\Omega) & \mathrm{if} & 0<\alpha<\beta<1.
\end{array}
\right.
\]

\begin{remark}
\label{cdneh}Combining (\ref{s-holder}) and (\ref{Lamp}) we have
\[
\lim_{p\rightarrow\infty}\frac{\left[  \phi_{R}\right]  _{\alpha,p}%
}{\left\Vert \phi_{R}\right\Vert _{\infty}}=R^{-\alpha}<\Lambda:=\lim
_{p\rightarrow\infty}\sqrt[p]{\mu_{p}}.
\]
Consequently, $\mu_{p}\left\Vert \phi_{R}\right\Vert _{\infty}^{p}>\left[
\phi_{R}\right]  _{\alpha,p}^{p}$ for all $p$ large enough.
\end{remark}

\begin{proposition}
\label{limits}Suppose (\ref{q/p}) and (\ref{Lamp}) hold. Then,%
\begin{equation}
\lim_{p\rightarrow\infty}\left[  u_{p}\right]  _{\beta,q}=\left(  \Lambda
R^{\beta}\right)  ^{\frac{1}{Q-1}}\quad\mathrm{and}\quad\lim_{p\rightarrow
\infty}\left\Vert u_{p}\right\Vert _{\infty}=R^{\beta}(\Lambda R^{\beta
})^{\frac{1}{Q-1}}, \label{lim1b}%
\end{equation}

\end{proposition}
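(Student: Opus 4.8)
The plan is to extract two-sided bounds on $T_p:=[u_p]_{\beta,q}$ and $M_p:=\|u_p\|_\infty$ from the variational characterization of $u_p$ together with the Nehari identity, and then pass to the limit using the asymptotic data already available. First I would record the facts used repeatedly. Since $u_p$ is a weak solution, Remark \ref{nec} gives the Nehari identity $[u_p]_{\alpha,p}^p+T_p^q=\mu_p M_p^p$, and substituting it into $E_{\mu_p}$ yields the energy identity $E_{\mu_p}(u_p)=(\tfrac1q-\tfrac1p)T_p^q$ (this is (\ref{1/q-1/p}) under (\ref{H1a}), and the same computation gives it under (\ref{H1b})). I also record the eigenvalue inequalities $[u_p]_{\alpha,p}^p\ge\lambda_{\alpha,p}M_p^p$ and $T_p^q\ge\lambda_{\beta,q}M_p^q$, the limits $\sqrt[p]{\lambda_{\alpha,p}}\to R^{-\alpha}$ and $\sqrt[q]{\lambda_{\beta,q}}\to R^{-\beta}$ (Proposition \ref{Rmin}), $\sqrt[p]{\mu_p}\to\Lambda$ (by (\ref{Lamp})), and $[\phi_R]_{\alpha,p}\to R^{1-\alpha}$, $[\phi_R]_{\beta,q}\to R^{1-\beta}$ (by (\ref{s-holder})). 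Finally I note the exponential-dominance facts $\tfrac1p\ln(\mu_p R^p-[\phi_R]_{\alpha,p}^p)\to\ln(R\Lambda)$ and $\tfrac1p\ln(\mu_p-\lambda_{\alpha,p})\to\ln\Lambda$: in each case the subtracted term is negligible after taking $p$-th roots because the strict gap $R^{-\alpha}<\Lambda$ from (\ref{Lamp}) and Remark \ref{cdneh} forces the relevant ratio to tend to $0$.

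For the first bound I would test minimality of $u_p$ against a rescaling of $\phi_R$. Under (\ref{H1a}) I take $t\phi_R\in\mathcal{N}_{\mu_p}$ with $t$ given by (\ref{condneh}); minimality $E_{\mu_p}(u_p)\le E_{\mu_p}(t\phi_R)$, the energy identity, and $\tfrac1q-\tfrac1p>0$ give $T_p\le t[\phi_R]_{\beta,q}$, so after taking logarithms and using the recorded limits, $\limsup_p T_p\le(\Lambda R^\beta)^{1/(Q-1)}=:L$. Under (\ref{H1b}) I instead minimize the explicit scalar function $t\mapsto E_{\mu_p}(t\phi_R)$ over $t>0$, whose minimizer is $t_*=((\mu_p R^p-[\phi_R]_{\alpha,p}^p)/[\phi_R]_{\beta,q}^q)^{1/(q-p)}$ and at which $E_{\mu_p}(t_*\phi_R)=(\tfrac1q-\tfrac1p)\,t_*^p(\mu_p R^p-[\phi_R]_{\alpha,p}^p)$; since now $\tfrac1q-\tfrac1p<0$, the inequality $E_{\mu_p}(u_p)\le E_{\mu_p}(t_*\phi_R)$ flips to $T_p^q\ge t_*^p(\mu_p R^p-[\phi_R]_{\alpha,p}^p)$, whence $\liminf_p T_p\ge L$ by the same logarithmic passage.

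For the complementary bound I would use only the Nehari identity and the eigenvalue inequalities. Combining $T_p^q\le(\mu_p-\lambda_{\alpha,p})M_p^p$ (from dropping $[u_p]_{\alpha,p}^p\ge\lambda_{\alpha,p}M_p^p$ into the identity) with $M_p\le\lambda_{\beta,q}^{-1/q}T_p$ (from $T_p^q\ge\lambda_{\beta,q}M_p^q$) gives $T_p^{q-p}\le(\mu_p-\lambda_{\alpha,p})\lambda_{\beta,q}^{-p/q}$. Taking logarithms and dividing by $q-p$, whose sign is negative under (\ref{H1a}) and positive under (\ref{H1b}), produces $\liminf_p T_p\ge L$ in the first case and $\limsup_p T_p\le L$ in the second. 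In both regimes the two halves combine to $\lim_p T_p=L=(\Lambda R^\beta)^{1/(Q-1)}$, the first assertion of (\ref{lim1b}).

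Finally, for $M_p$ I would sandwich directly. The inequality $M_p\le\lambda_{\beta,q}^{-1/q}T_p$ gives $\limsup_p M_p\le R^\beta L$ since $\lambda_{\beta,q}^{-1/q}\to R^\beta$; for the reverse I drop $[u_p]_{\alpha,p}^p\ge0$ in the Nehari identity to get $M_p\ge(T_p^q/\mu_p)^{1/p}$, so that $\ln M_p\ge\tfrac qp\ln T_p-\tfrac1p\ln\mu_p\to Q\ln L-\ln\Lambda$, and a short algebraic check (using $\ln L=\tfrac{1}{Q-1}(\ln\Lambda+\beta\ln R)$) shows $Q\ln L-\ln\Lambda=\ln(R^\beta L)$, giving $\liminf_p M_p\ge R^\beta L$ and hence $\lim_p M_p=R^\beta(\Lambda R^\beta)^{1/(Q-1)}$. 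I expect the only delicate point to be the uniform bookkeeping of signs across the two regimes $Q<1$ and $Q>1$ (equivalently $p>q$ versus $p<q$) together with justifying the exponential-dominance limits; everything else is the routine logarithmic passage enabled by Proposition \ref{Rmin} and (\ref{s-holder}).
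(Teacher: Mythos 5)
Your proposal is correct and follows essentially the same route as the paper: the upper/lower bounds on $[u_p]_{\beta,q}$ come from testing the minimality of $u_p$ against the rescaled cone $t\phi_R$ (with the same choice of $t$) on one side and from the Nehari identity combined with the $\lambda_{\beta,q}$-eigenvalue inequality on the other, with the sign of $q-p$ deciding which side each argument controls, and the limit of $\left\Vert u_p\right\Vert_\infty$ then follows from the same two-sided sandwich $(\left[u_p\right]_{\beta,q})^{q/p}\mu_p^{-1/p}\leq\left\Vert u_p\right\Vert_\infty\leq\lambda_{\beta,q}^{-1/q}\left[u_p\right]_{\beta,q}$. The only cosmetic difference is that you keep the slightly sharper factor $\mu_p-\lambda_{\alpha,p}$ where the paper simply drops $\left[u_p\right]_{\alpha,p}^p\geq 0$; this changes nothing in the limit.
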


\begin{proof}
We assume that $p$ is large enough so that $u_{p}$ exists according to Theorem
\ref{teo1}.

Since $u_{p}$ is a weak solution of (\ref{pq(p)}) and $W_{0}^{\beta,q}%
(\Omega)$ is continuously embedded into $C(\overline{\Omega})$ we have%
\begin{equation}
\left[  u_{p}\right]  _{\beta,q}^{q}\leq\left[  u_{p}\right]  _{\alpha,p}%
^{p}+\left[  u_{p}\right]  _{\beta,q}^{q}=\mu_{p}\left\Vert u_{p}\right\Vert
_{\infty}^{p}\leq\mu_{p}\frac{\left[  u_{p}\right]  _{\beta,q}^{p}}%
{(\sqrt[q]{\lambda_{\beta,q}})^{p}}, \label{aux1b}%
\end{equation}
so that%
\[
\frac{(\left[  u_{p}\right]  _{\beta,q})^{\frac{q}{p}}}{\sqrt[p]{\mu_{p}}}%
\leq\left\Vert u_{p}\right\Vert _{\infty}\leq\frac{\left[  u_{p}\right]
_{\beta,q}}{\sqrt[q]{\lambda_{\beta,q}}}.
\]

Hence, taking into account the first equality in (\ref{linds}) and
(\ref{Lamp}) we easily check that the second limit in (\ref{lim1b}) is
consequence of the first one.

Let us then prove the first limit (\ref{lim1b}).

We start with the case $Q\in(1,\infty),$ where necessarily $p<q$ (and
$0<\alpha<\beta$). After isolating $\left[  u_{p}\right]  _{\beta,q}$ in
(\ref{aux1b}) we obtain%
\begin{equation}
\limsup_{p\rightarrow\infty}\left[  u_{p}\right]  _{\beta,q}\leq
\lim_{p\rightarrow\infty}\left(  \frac{\sqrt[p]{\mu_{p}}}{\sqrt[q]%
{\lambda_{\beta,q}}}\right)  ^{\frac{p}{q-p}}=\left(  \Lambda R^{\beta
}\right)  ^{\frac{1}{Q-1}}. \label{aux2b}%
\end{equation}

Let%
\[
t=\left(  \frac{\mu_{p}\left\Vert \phi_{R}\right\Vert _{\infty}^{p}-\left[
\phi_{R}\right]  _{\alpha,p}^{p}}{\left[  \phi_{R}\right]  _{\beta,q}^{q}%
}\right)  ^{\frac{1}{q-p}}=\left(  \frac{\mu_{p}R^{p}-\left[  \phi_{R}\right]
_{\alpha,p}^{p}}{\left[  \phi_{R}\right]  _{\beta,q}^{q}}\right)  ^{\frac
{1}{q-p}}.
\]
(Note from Remark \ref{cdneh} that $t$ is well-defined). It is simple to
verify that
\[
E_{\mu_{p}}(t\phi_{R})=\left(  \frac{1}{q}-\frac{1}{p}\right)  t^{q}\left[
\phi_{R}\right]  _{\beta,q}^{q}.
\]

Noticing that
\[
\left(  \frac{1}{q}-\frac{1}{p}\right)  \left[  u_{p}\right]  _{\beta,q}%
^{q}=E_{\mu_{p}}(u_{p})\leq E_{\mu_{p}}(t\phi_{R})=\left(  \frac{1}{q}%
-\frac{1}{p}\right)  t^{q}\left[  \phi_{R}\right]  _{\beta,q}^{q}<0
\]
we obtain
\[
\left[  u_{p}\right]  _{\beta,q}\geq t\left[  \phi_{R}\right]  _{\beta
,q}=\left(  \frac{\mu_{p}R^{p}-\left[  \phi_{R}\right]  _{\alpha,p}^{p}%
}{\left[  \phi_{R}\right]  _{\beta,q}^{q}}\right)  ^{\frac{1}{q-p}}\left[
\phi_{R}\right]  _{\beta,q}=\left(  \frac{\sqrt[p]{\mu_{p}}R}{\left[  \phi
_{R}\right]  _{\beta,q}}\sqrt[p]{1-(a_{p})^{p}}\right)  ^{\frac{p}{q-p}},
\]
where%
\begin{equation}
a_{p}:=\frac{\left[  \phi_{R}\right]  _{\alpha,p}}{\sqrt[p]{\mu_{p}}R}.
\label{ap}%
\end{equation}

Since%
\[
\lim_{p\rightarrow\infty}a_{p}=\frac{R^{1-\alpha}}{\Lambda R}=\frac
{R^{-\alpha}}{\Lambda}<1,
\]
we can verify that%
\[
\lim_{p\rightarrow\infty}\sqrt[p]{1-(a_{p})^{p}}=1.
\]
Hence,
\[
\liminf_{p\rightarrow\infty}\left[  u_{p}\right]  _{\beta,q}\geq
\lim_{p\rightarrow\infty}\left(  \frac{\sqrt[p]{\mu_{p}}R}{\left[  \phi
_{R}\right]  _{\beta,q}}\right)  ^{\frac{p}{q-p}}\lim_{p\rightarrow\infty
}\left(  \sqrt[p]{1-(a_{p})^{p}}\right)  ^{\frac{p}{q-p}}=\left(
\frac{\Lambda R}{R^{1-\beta}}\right)  ^{\frac{1}{Q-1}}=(\Lambda R^{\beta
})^{\frac{1}{Q-1}}.
\]

Combining this with (\ref{aux2b}) we obtain the first limit in (\ref{lim1b}).

Now, let us analyze the case $Q\in(0,1),$ where necessarily $q<p$ (and
$0<\beta<\alpha$). In this case,
\[
0<\left(  \frac{1}{q}-\frac{1}{p}\right)  \left[  u_{p}\right]  _{\beta,q}%
^{q}=E_{\mu_{p}}(u_{p})\leq E_{\mu_{p}}(t\phi_{R})=\left(  \frac{1}{q}%
-\frac{1}{p}\right)  t^{q}\left[  \phi_{R}\right]  _{\beta,q}^{q}%
\]
where%
\[
t=\left(  \frac{\left[  \phi_{R}\right]  _{\beta,q}^{q}}{\mu_{p}\left\Vert
\phi_{R}\right\Vert _{\infty}^{p}-\left[  \phi_{R}\right]  _{\alpha,p}^{p}%
}\right)  ^{\frac{1}{p-q}}=\left(  \frac{\left[  \phi_{R}\right]  _{\beta
,q}^{q}}{\mu_{p}R^{p}-\left[  \phi_{R}\right]  _{\alpha,p}^{p}}\right)
^{\frac{1}{p-q}}%
\]
(which is also well-defined according to Remark \ref{cdneh}). It follows that%
\[
\left[  u_{p}\right]  _{\beta,q}\leq t\left[  \phi_{R}\right]  _{\beta
,q}=\left(  \frac{\left[  \phi_{R}\right]  _{\beta,q}}{\sqrt[p]{\mu_{p}%
}R\sqrt[p]{1-(a_{p})^{p}}}\right)  ^{\frac{p}{p-q}}%
\]
where $a_{p}$ is also given by (\ref{ap}). Consequently,%
\begin{equation}
\limsup_{p\rightarrow\infty}\left[  u_{p}\right]  _{\beta,q}\leq
\lim_{p\rightarrow\infty}\left(  \frac{\left[  \phi_{R}\right]  _{\beta,q}%
}{\sqrt[p]{\mu_{p}}R\sqrt[p]{1-(a_{p})^{p}}}\right)  ^{\frac{p}{p-q}}=\left(
\frac{R^{1-\beta}}{\Lambda R}\right)  ^{\frac{1}{1-Q}}=\left(  \Lambda
R^{\beta}\right)  ^{\frac{1}{Q-1}}. \label{aux2a}%
\end{equation}

After isolating $\left[  u_{p}\right]  _{\beta,q}$ in (\ref{aux1b}) we obtain%
\[
\liminf_{p\rightarrow\infty}\left[  u_{p}\right]  _{\beta,q}\geq
\lim_{p\rightarrow\infty}\left(  \frac{\sqrt[q]{\lambda_{\beta,q}}}%
{\sqrt[p]{\mu_{p}}}\right)  ^{\frac{p}{p-q}}=\left(  \frac{R^{-\beta}}%
{\Lambda}\right)  ^{\frac{1}{1-Q}}=\left(  \Lambda R^{\beta}\right)
^{\frac{1}{Q-1}},
\]
which combined with (\ref{aux2a}) provides the first limit in (\ref{lim1b}).
\end{proof}

\begin{corollary}
\label{cor1}Suppose (\ref{q/p}) and (\ref{Lamp}) hold. Then,%
\[
\frac{1}{\Lambda R^{\alpha}}(\Lambda R^{\beta})^{\frac{Q}{Q-1}}\leq
\liminf_{p\rightarrow\infty}\left[  u_{p}\right]  _{\alpha,p}\leq
\limsup_{p\rightarrow\infty}\left[  u_{p}\right]  _{\alpha,p}\leq(\Lambda
R^{\beta})^{\frac{Q}{Q-1}}.
\]

\end{corollary}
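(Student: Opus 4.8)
The plan is to read off both bounds directly from the fundamental identity satisfied by every least energy solution, combined with the two limits already secured in Proposition \ref{limits}. Testing the weak formulation of (\ref{pq(p)}) with $\varphi=u_p$, exactly as in Remark \ref{nec}, gives the Nehari-type identity
\[
[u_p]_{\alpha,p}^p+[u_p]_{\beta,q}^q=\mu_p\|u_p\|_\infty^p,
\]
valid for all $p>p_0$. Isolating the first term yields $[u_p]_{\alpha,p}^p=\mu_p\|u_p\|_\infty^p-[u_p]_{\beta,q}^q$, so the whole problem reduces to taking $p$-th roots and passing to the limit. The one point that needs care is that both $\sqrt[p]{\mu_p\|u_p\|_\infty^p}=\sqrt[p]{\mu_p}\,\|u_p\|_\infty$ and $\big([u_p]_{\beta,q}^q\big)^{1/p}=[u_p]_{\beta,q}^{q/p}$ converge, by (\ref{Lamp}), (\ref{q/p}) and Proposition \ref{limits}, to the \emph{same} value $(\Lambda R^\beta)^{Q/(Q-1)}$; hence the $p$-th root of their difference is an indeterminate form that cannot be evaluated head-on. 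The plan is therefore to avoid computing the difference directly and instead to bracket $[u_p]_{\alpha,p}$ between two quantities whose limits are explicit.

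For the upper bound I would simply discard the nonnegative term $[u_p]_{\beta,q}^q$ in the identity, obtaining $[u_p]_{\alpha,p}^p\le\mu_p\|u_p\|_\infty^p$, that is, $[u_p]_{\alpha,p}\le\sqrt[p]{\mu_p}\,\|u_p\|_\infty$. Passing to the limit via (\ref{Lamp}) and the second limit in (\ref{lim1b}) gives $\limsup_{p\to\infty}[u_p]_{\alpha,p}\le\Lambda R^\beta(\Lambda R^\beta)^{1/(Q-1)}=(\Lambda R^\beta)^{Q/(Q-1)}$, which is exactly the claimed upper estimate.

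For the lower bound I would use instead the variational characterization (\ref{ep}) of $\lambda_{\alpha,p}$: since $u_p\in W_0^{\alpha,p}(\Omega)\setminus\{0\}$, the defining infimum yields $[u_p]_{\alpha,p}^p\ge\lambda_{\alpha,p}\|u_p\|_\infty^p$, whence $[u_p]_{\alpha,p}\ge\sqrt[p]{\lambda_{\alpha,p}}\,\|u_p\|_\infty$. Invoking Proposition \ref{Rmin} for $\sqrt[p]{\lambda_{\alpha,p}}\to R^{-\alpha}$ together with the limit of $\|u_p\|_\infty$ in (\ref{lim1b}) gives $\liminf_{p\to\infty}[u_p]_{\alpha,p}\ge R^{-\alpha}R^\beta(\Lambda R^\beta)^{1/(Q-1)}$, and a short rearrangement of exponents (using $Q/(Q-1)-1=1/(Q-1)$) rewrites the right-hand side as $\frac{1}{\Lambda R^\alpha}(\Lambda R^\beta)^{Q/(Q-1)}$. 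Combining the two displayed inequalities is precisely the statement of the corollary; beyond the Nehari identity, the only inputs are Propositions \ref{Rmin} and \ref{limits}, so no further compactness or regularity is required.
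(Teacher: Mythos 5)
Your proposal is correct and follows essentially the same route as the paper: the paper's one-line proof combines the chain $\lambda_{\alpha,p}\|u_p\|_\infty^p\le[u_p]_{\alpha,p}^p\le\mu_p\|u_p\|_\infty^p$ (from the definition of $\lambda_{\alpha,p}$ and the Nehari identity) with the second limit in (\ref{lim1b}), exactly as you do. Your exponent bookkeeping, including the identification $R^{-\alpha}R^{\beta}(\Lambda R^{\beta})^{1/(Q-1)}=\frac{1}{\Lambda R^{\alpha}}(\Lambda R^{\beta})^{Q/(Q-1)}$, checks out.
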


\begin{proof}
It follows from the second limit in (\ref{lim1b}) combined with the estimates%
\[
\lambda_{\alpha,p}\left\Vert u_{p}\right\Vert _{\infty}^{p}\leq\left[
u_{p}\right]  _{\alpha,p}^{p}\leq\left[  u_{p}\right]  _{\alpha,p}^{p}+\left[
u_{p}\right]  _{\beta,q}^{q}=\mu_{p}\left\Vert u_{p}\right\Vert _{\infty}%
^{p}.
\]

\end{proof}

In the next proposition we prove that the limit functions of the family
$\left\{  u_{p}\right\}  _{p>p_{0}},$ as $p\rightarrow\infty,$ belongs to
$C_{0}^{0,\beta}(\overline{\Omega})$ and minimize the quotient $\left\vert
u\right\vert _{\beta}/\left\Vert u\right\Vert _{\infty}$ in $C_{0}^{0,\beta
}(\overline{\Omega})\setminus\left\{  0\right\}  .$

\begin{proposition}
\label{betaHolder}Let $\left\{  p_{n}\right\}  $ and $\left\{  q_{n}\right\}
$ satisfying (\ref{q/p}), with $p_{n}\rightarrow\infty,$ and let $\mu_{n}%
:=\mu_{p_{n}}$ satisfying (\ref{Lamp}). Then, there exist $u_{\infty}\in
C_{0}^{0,\beta}(\overline{\Omega})$ and $x_{\infty}\in\Omega$ such that, up to
subsequences, $u_{p_{n}}\rightarrow u_{\infty}$ uniformly in $\overline
{\Omega}$ and $x_{p_{n}}\rightarrow x_{\infty}\in\Omega,$ with
\[
u_{\infty}(x_{\infty})=\left\Vert u_{\infty}\right\Vert _{\infty}=R^{\beta
}(\Lambda R^{\beta})^{\frac{1}{Q-1}}.
\]
Moreover,%
\[
\left\vert u_{\infty}\right\vert _{\beta}=(\Lambda R^{\beta})^{\frac{1}{Q-1}%
}=\lim\left[  u_{p_{n}}\right]  _{\beta,q_{n}}%
\]
and%
\begin{equation}
\frac{\left\vert u_{\infty}\right\vert _{\beta}}{\left\Vert u_{\infty
}\right\Vert _{\infty}}=\frac{1}{R^{\beta}}=\min_{u\in C_{0}^{0,\beta
}(\overline{\Omega})\setminus\left\{  0\right\}  }\frac{\left\vert
u\right\vert _{\beta}}{\left\Vert u\right\Vert _{\infty}}. \label{betaholder}%
\end{equation}

\end{proposition}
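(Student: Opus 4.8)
The plan is to combine the uniform estimates already established in Proposition~\ref{limits} with the Morrey-type compactness and the characterization of $R^{-\beta}$ from Proposition~\ref{Rmin}, following closely the scheme used to prove \eqref{aux2e}. First I would extract a uniformly convergent subsequence. Fix $m_0>N/\beta$. Since $[u_{p_n}]_{\beta,q_n}$ converges by Proposition~\ref{limits}, it is bounded; applying Morrey's inequality \eqref{Morrey} (with the constant $C$ chosen uniform in the exponent for large $q_n$, as recorded after \eqref{Morrey}) exactly as in the estimate preceding \eqref{aux2d}, I obtain a bound $|u_{p_n}|_{\beta-\frac{N}{m_0}}\le C(\operatorname{diam}\Omega)^{\frac{N}{m_0}-\frac{N}{q_n}}[u_{p_n}]_{\beta,q_n}$ that is uniform in $n$. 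Together with the uniform bound on $\|u_{p_n}\|_\infty$ (again Proposition~\ref{limits}), this makes $\{u_{p_n}\}$ bounded in $C_0^{0,\beta-N/m_0}(\overline{\Omega})$, which is compactly embedded in $C_0(\overline{\Omega})$. Hence, up to a subsequence, $u_{p_n}\to u_\infty$ uniformly with $u_\infty\in C_0(\overline{\Omega})$; passing to a further subsequence I may also assume $x_{p_n}\to x_\infty\in\overline{\Omega}$.

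Next I would show $u_\infty\in C_0^{0,\beta}(\overline{\Omega})$ with the sharp seminorm bound. For a fixed auxiliary exponent $k$, H\"older's inequality gives $\big(\int_\Omega\int_\Omega |u_{p_n}(x)-u_{p_n}(y)|^k|x-y|^{-(\frac{N}{q_n}+\beta)k}\,\mathrm{d}x\,\mathrm{d}y\big)^{1/k}\le |\Omega|^{2(\frac1k-\frac1{q_n})}[u_{p_n}]_{\beta,q_n}$. Letting $n\to\infty$ and using uniform convergence together with Fatou's Lemma, and then letting $k\to\infty$, I reach $|u_\infty|_\beta\le \lim_n[u_{p_n}]_{\beta,q_n}=(\Lambda R^\beta)^{\frac{1}{Q-1}}$ by Proposition~\ref{limits}; in particular $u_\infty\in C_0^{0,\beta}(\overline{\Omega})$. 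This step mirrors the second half of the proof of Proposition~\ref{Rmin}, and the only care needed is to keep $k$ fixed while $q_n\to\infty$ before sending $k$ to infinity.

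Finally I would identify all the limit quantities. Uniform convergence yields $\|u_\infty\|_\infty=\lim_n\|u_{p_n}\|_\infty=R^\beta(\Lambda R^\beta)^{\frac{1}{Q-1}}>0$ by Proposition~\ref{limits}, so $u_\infty\not\equiv0$. Since $u_{p_n}(x_{p_n})=\|u_{p_n}\|_\infty$, $x_{p_n}\to x_\infty$ and $u_\infty$ is continuous, I get $u_\infty(x_\infty)=\|u_\infty\|_\infty$; because $u_\infty$ vanishes on $\partial\Omega$ while $u_\infty(x_\infty)>0$, necessarily $x_\infty\in\Omega$. For \eqref{betaholder}, the seminorm bound of the previous paragraph gives $|u_\infty|_\beta/\|u_\infty\|_\infty\le R^{-\beta}$, whereas Proposition~\ref{Rmin} (with $s=\beta$) gives the reverse inequality $R^{-\beta}\le |u_\infty|_\beta/\|u_\infty\|_\infty$ and identifies $R^{-\beta}$ as the minimum of the quotient over $C_0^{0,\beta}(\overline{\Omega})\setminus\{0\}$. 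Equality then forces $|u_\infty|_\beta=R^{-\beta}\|u_\infty\|_\infty=(\Lambda R^\beta)^{\frac{1}{Q-1}}$, which simultaneously upgrades the inequality of the second paragraph to an equality and proves \eqref{betaholder}.

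The only genuinely delicate point in the whole argument is the compactness with a \emph{varying} Gagliardo exponent $q_n$, which is why the uniform Morrey constant is needed; and in the Fatou passage one must keep the auxiliary exponent $k$ fixed while letting $q_n\to\infty$. Everything else is a careful assembly of Propositions~\ref{limits} and~\ref{Rmin}.
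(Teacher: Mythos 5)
Your proposal is correct and follows essentially the same route as the paper's own proof: Morrey-type compactness in $C_0^{0,\beta-N/m_0}(\overline{\Omega})$ with a uniform constant, the H\"older--Fatou argument with a fixed auxiliary exponent sent to infinity after $q_n\rightarrow\infty$ to get $\left\vert u_{\infty}\right\vert _{\beta}\leq(\Lambda R^{\beta})^{\frac{1}{Q-1}}$, and then the minimality of $R^{-\beta}$ from Proposition \ref{Rmin} to force equality. No gaps.
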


\begin{proof}
Since $\Omega$ is bounded, we can assume that (passing to a subsequence)
$x_{p_{n}}$ converges to a point $x_{\infty}\in\overline{\Omega}.$ Fix
$m_{0}>N/\beta\ $and assume that $n$ is large enough so that $m_{0}<\left\{
p_{n},q_{n}\right\}  .$

Taking into account the inequality (\ref{Morrey}), we have (as in Proposition
\ref{Rmin})
\begin{align*}
\left\vert u_{p_{n}}\right\vert _{\beta-\frac{N}{m_{0}}}  &  =\sup
_{(x,y)\not =(0,0)}\frac{\left\vert u_{p_{n}}(x)-u_{p_{n}}(y)\right\vert
}{\left\vert x-y\right\vert ^{\beta-\frac{N}{m_{0}}}}\\
&  \leq(\operatorname{diam}(\Omega))^{(\frac{N}{m_{0}}-\frac{N}{q_{n}})}%
\sup_{(x,y)\not =(0,0)}\frac{\left\vert u_{p_{n}}(x)-u_{p_{n}}(y)\right\vert
}{\left\vert x-y\right\vert ^{\beta-\frac{N}{q_{n}}}}\leq(\operatorname{diam}%
(\Omega))^{(\frac{N}{m_{0}}-\frac{N}{q_{n}})}C\left[  u_{p_{n}}\right]
_{\beta,q_{n}}.
\end{align*}

The first limit in (\ref{lim1b}) implies that $\left\{  u_{p_{n}}\right\}  $
is uniformly bounded in the H\"{o}lder space $C_{0}^{0,\beta-\frac{N}{m_{0}}%
}(\overline{\Omega}),$ which is compactly embedded in $C_{0}(\overline{\Omega
}).$ It follows that, up to a subsequence, $\left\{  u_{p_{n}}\right\}  $
converges uniformly in $\overline{\Omega}$ to a function $u_{\infty}\in
C_{0}(\overline{\Omega}).$ Of course, $\left\Vert u_{\infty}\right\Vert
=u_{\infty}(x_{\infty})$ and, by virtue of the second limit in (\ref{lim1b}),
\[
u_{\infty}(x_{\infty})=R^{\beta}(\Lambda R^{\beta})^{\frac{1}{Q-1}}>0,
\]
so that $x_{\infty}\not \in \partial\Omega.$

Now, if $m>m_{0}$ and $n$ is sufficiently large such that $q_{n}>m$,
H\"{o}lder's inequality yields
\begin{align*}
\int_{\Omega}\int_{\Omega}\frac{\left\vert u_{p_{n}}(x)-u_{p_{n}%
}(y)\right\vert ^{m}}{\left\vert x-y\right\vert ^{\frac{Nm}{q_{n}}+\beta m}%
}\mathrm{d}x\mathrm{d}y  &  \leq\left\vert \Omega\right\vert ^{2(1-\frac
{m}{q_{n}})}\left(  \int_{\Omega}\int_{\Omega}\frac{\left\vert u_{p_{n}%
}(x)-u_{p_{n}}(y)\right\vert ^{q_{n}}}{\left\vert x-y\right\vert ^{N+\beta
q_{n}}}\mathrm{d}x\mathrm{d}y\right)  ^{\frac{m}{q_{n}}}\\
&  \leq\left\vert \Omega\right\vert ^{2(1-\frac{m}{q_{n}})}\left(  \left[
u_{p_{n}}\right]  _{\beta,q_{n}}\right)  ^{m}.
\end{align*}

Hence, combining the first limit in (\ref{lim1b}) and Fatou's Lemma,%
\begin{align*}
\int_{\Omega}\int_{\Omega}\frac{\left\vert u_{\infty}(x)-u_{\infty
}(y)\right\vert ^{m}}{\left\vert x-y\right\vert ^{\beta m}}\mathrm{d}%
x\mathrm{d}y  &  \leq\liminf_{n\rightarrow\infty}\int_{\Omega}\int_{\Omega
}\frac{\left\vert u_{p_{n}}(x)-u_{p_{n}}(y)\right\vert ^{m}}{\left\vert
x-y\right\vert ^{\frac{Nm}{q_{n}}+\beta m}}\mathrm{d}x\mathrm{d}y\\
&  \leq\left\vert \Omega\right\vert ^{2}\liminf_{n\rightarrow\infty}\left(
\left[  u_{p_{n}}\right]  _{\beta,q_{n}}\right)  ^{m}=\left\vert
\Omega\right\vert ^{2}\left(  \Lambda R^{\beta}\right)  ^{\frac{m}{Q-1}}.
\end{align*}

Therefore,
\[
\left\vert u_{\infty}\right\vert _{\beta}=\lim_{m\rightarrow\infty}\left(
\int_{\Omega}\int_{\Omega}\frac{\left\vert u_{\infty}(x)-u_{\infty
}(y)\right\vert ^{m}}{\left\vert x-y\right\vert ^{\beta m}}\mathrm{d}%
x\mathrm{d}y\right)  ^{\frac{1}{m}}\leq(\Lambda R^{\beta})^{\frac{1}{Q-1}}.
\]

It follows that $u_{\infty}\in C_{0}^{0,\beta}(\overline{\Omega}).$ Hence,
observing that
\[
\frac{1}{R^{\beta}}=\min_{v\in C_{0}^{0,\beta}(\overline{\Omega}%
)\setminus\left\{  0\right\}  }\frac{\left\vert v\right\vert _{\beta}%
}{\left\Vert v\right\Vert _{\infty}}\leq\frac{\left\vert u_{\infty}\right\vert
_{\beta}}{\left\Vert u_{\infty}\right\Vert _{\infty}}=\frac{\left\vert
u_{\infty}\right\vert _{\beta}}{R^{\beta}(\Lambda R^{\beta})^{\frac{1}{Q-1}}%
},
\]
we obtain%
\[
\left\vert u_{\infty}\right\vert _{\beta}\geq(\Lambda R^{\beta})^{\frac
{1}{Q-1}}.
\]

Therefore,
\[
\left\vert u_{\infty}\right\vert _{\beta}=(\Lambda R^{\beta})^{\frac{1}{Q-1}%
}=\lim\left[  u_{p_{n}}\right]  _{\beta,q_{n}}%
\]
and
\[
\frac{\left\vert u_{\infty}\right\vert _{\beta}}{\left\Vert u_{\infty
}\right\Vert _{\infty}}=\frac{\left(  \Lambda R^{\beta}\right)  ^{\frac
{1}{Q-1}}}{R^{\beta}(\Lambda R^{\beta})^{\frac{1}{Q-1}}}=\frac{1}{R^{\beta}%
}=\min_{v\in C_{0}^{0,\beta}(\overline{\Omega})\setminus\left\{  0\right\}
}\frac{\left\vert v\right\vert _{\beta}}{\left\Vert v\right\Vert _{\infty}}.
\]

\end{proof}

\begin{remark}
Considering Corollary \ref{cor1} we can reproduce the proof of Proposition
\ref{betaHolder} to conclude that, in the case $Q\in(0,1),$ the limit function
is more regular: $u_{\infty}\in C_{0}^{0,\alpha}(\overline{\Omega})$ and,
moreover,
\[
R^{-\alpha}\leq\frac{\left\vert u_{\infty}\right\vert _{\alpha}}{\left\Vert
u_{\infty}\right\Vert _{\infty}}\leq\Lambda.
\]
These estimates are also valid in the complementary case $Q\in(1,\infty),$
where obviously the $\beta$-regularity is better that $\alpha$-regularity
since $0<\alpha<\beta.$
\end{remark}

\begin{corollary}
\label{BH1}One has%
\[
u_{\infty}(x)\leq\left(  \Lambda R^{\beta}\right)  ^{\frac{1}{Q-1}}\left(
\operatorname{dist}(x,\partial\Omega)\right)  ^{\beta}\quad\forall\,x\in\Omega
\]
and, therefore, the maximum point $x_{\infty}$ of $u_{\infty}$ is also a
maximum point of the distance function to the boundary $\partial\Omega.$
\end{corollary}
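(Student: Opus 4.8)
The plan is to read this off from two facts already proved in Proposition~\ref{betaHolder}: that $u_{\infty}\in C_{0}^{0,\beta}(\overline{\Omega})$, so in particular $u_{\infty}$ vanishes on $\partial\Omega$, and that its $\beta$-Hölder seminorm is exactly $\left(\Lambda R^{\beta}\right)^{\frac{1}{Q-1}}$. With these in hand, the pointwise inequality is essentially the definition of the Hölder seminorm evaluated with one argument on the boundary, and the statement about the maximiser is an immediate consequence of the explicit value of $\left\Vert u_{\infty}\right\Vert_{\infty}$.

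Concretely, I would fix $x\in\Omega$ and note that for every $y\in\partial\Omega$ one has $u_{\infty}(y)=0$, so
\[
u_{\infty}(x)=u_{\infty}(x)-u_{\infty}(y)\leq\left\vert u_{\infty}(x)-u_{\infty}(y)\right\vert\leq\left\vert u_{\infty}\right\vert_{\beta}\left\vert x-y\right\vert^{\beta}.
\]
Because $t\mapsto t^{\beta}$ is increasing, taking the infimum over $y\in\partial\Omega$ gives $u_{\infty}(x)\leq\left\vert u_{\infty}\right\vert_{\beta}\left(\operatorname{dist}(x,\partial\Omega)\right)^{\beta}$, and substituting the value $\left\vert u_{\infty}\right\vert_{\beta}=\left(\Lambda R^{\beta}\right)^{\frac{1}{Q-1}}$ from Proposition~\ref{betaHolder} yields the claimed bound.

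For the statement about the maximiser, I would simply evaluate this bound at $x=x_{\infty}$ and insert the value $u_{\infty}(x_{\infty})=\left\Vert u_{\infty}\right\Vert_{\infty}=R^{\beta}\left(\Lambda R^{\beta}\right)^{\frac{1}{Q-1}}$ recorded in Proposition~\ref{betaHolder}. This produces $R^{\beta}\left(\Lambda R^{\beta}\right)^{\frac{1}{Q-1}}\leq\left(\Lambda R^{\beta}\right)^{\frac{1}{Q-1}}\left(\operatorname{dist}(x_{\infty},\partial\Omega)\right)^{\beta}$; dividing by the positive factor $\left(\Lambda R^{\beta}\right)^{\frac{1}{Q-1}}$ and taking $\beta$-th roots gives $R\leq\operatorname{dist}(x_{\infty},\partial\Omega)$. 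Since the definition of the inradius forces $\operatorname{dist}(x_{\infty},\partial\Omega)\leq R$, we conclude $\operatorname{dist}(x_{\infty},\partial\Omega)=R$, i.e.\ $x_{\infty}$ realises the maximum of the distance function.

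I do not expect any genuine obstacle, as this is an optimal-Hölder-extension statement read directly off the extremal seminorm in Proposition~\ref{betaHolder}. The only points deserving a word of care are that $u_{\infty}\geq0$ (so the evaluation at $x_{\infty}$ and the maximum-point interpretation are legitimate), which holds because each $u_{p}$ is positive and $u_{\infty}$ is their uniform limit, and that for interior $x$ the distance to $\partial\Omega$ agrees with the distance to $\mathbb{R}^{N}\setminus\Omega$ appearing in the definition of $R$, so that $R=\max_{x\in\overline{\Omega}}\operatorname{dist}(x,\partial\Omega)$ indeed dominates $\operatorname{dist}(x_{\infty},\partial\Omega)$.
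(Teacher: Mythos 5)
Your proof is correct and follows essentially the same route as the paper: both bound $u_{\infty}(x)$ by $\left\vert u_{\infty}\right\vert_{\beta}$ times the $\beta$-th power of the distance to the boundary (the paper picks the nearest boundary point $y_{x}$ rather than taking an infimum, a cosmetic difference), and both then evaluate at $x_{\infty}$ and compare with $R$ to force $\operatorname{dist}(x_{\infty},\partial\Omega)=R$. No gaps.
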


\begin{proof}
For each $x\in\Omega$ let $y_{x}\in\partial\Omega$ be such
\[
\operatorname{dist}(x,\partial\Omega)=\left\vert x-y_{x}\right\vert .
\]
Then, since $u_{\infty}(y_{x})=0$ and $\left\vert u_{\infty}\right\vert
_{\beta}=\left(  \Lambda R^{\beta}\right)  ^{\frac{1}{Q-1}},$ we get
\[
u_{\infty}(x)=\left\vert u_{\infty}(x)-u_{\infty}(y_{x})\right\vert
\leq\left\vert u_{\infty}\right\vert _{\beta}\left\vert x-y_{x}\right\vert
^{\beta}=\left(  \Lambda R^{\beta}\right)  ^{\frac{1}{Q-1}}\left(
\operatorname{dist}(x,\partial\Omega)\right)  ^{\beta}.
\]
Hence, observing that $\operatorname{dist}(x,\partial\Omega)=\left\vert
x-y_{x}\right\vert \leq R$ and $u_{\infty}(x_{\infty})=R^{\beta}(\Lambda
R^{\beta})^{\frac{1}{Q-1}},$ we obtain%

\[
R^{\beta}(\Lambda R^{\beta})^{\frac{1}{Q-1}}\leq\left(  \Lambda R^{\beta
}\right)  ^{\frac{1}{Q-1}}\left(  \operatorname{dist}(x_{\infty}%
,\partial\Omega)\right)  ^{\beta}\leq\left(  \Lambda R^{\beta}\right)
^{\frac{1}{Q-1}}R^{\beta},
\]
so that%
\[
\operatorname{dist}(x_{\infty},\partial\Omega)=R=\left\Vert
\operatorname{dist}(\cdot,\partial\Omega)\right\Vert _{\infty}.
\]

\end{proof}

In the sequel, we argue that the function $u_{\infty}$ is a viscosity solution
of the equation%
\begin{equation}
\max\left\{  \mathcal{L}_{\alpha}^{+}u,\left(  \mathcal{L}_{\beta}%
^{+}u\right)  ^{Q}\right\}  =\max\left\{  -\mathcal{L}_{\alpha}^{-}u,\left(
-\mathcal{L}_{\beta}^{-}u\right)  ^{Q}\right\}  \label{max=max}%
\end{equation}
in $\Omega\setminus\left\{  x_{\infty}\right\}  $ (the operators
$\mathcal{L}_{\alpha}^{+}$ and $\mathcal{L}_{\alpha}^{-}$ are defined in
(\ref{Ls+-})). This means that $u_{\infty}$ is both a supersolution and a
subsolution of (\ref{max=max}) or, equivalently, $u_{\infty}$ meets the
(respective) requirements:

\begin{itemize}
\item $\max\left\{  \left(  \mathcal{L}_{\alpha}^{+}\varphi\right)
(x_{0}),\left(  \left(  \mathcal{L}_{\beta}^{+}\varphi\right)  (x_{0})\right)
^{Q}\right\}  \leq\max\left\{  \left(  -\mathcal{L}_{\alpha}^{-}%
\varphi\right)  (x_{0}),\left(  \left(  -\mathcal{L}_{\beta}^{-}%
\varphi\right)  (x_{0})\right)  ^{Q}\right\}  $ for every the pair $\left(
x_{0},\varphi\right)  \in(\Omega\setminus\left\{  x_{\infty}\right\}  )\times
C_{c}^{1}(\mathbb{R}^{N})$ satisfying
\[
\varphi(x_{0})=u(x_{0})\quad\mathrm{and}\quad\varphi(x)\leq u(x)\quad
\forall\,x\in\mathbb{R}^{N}\setminus\left\{  x_{0},x_{\infty}\right\}  ,
\]

\item $\max\left\{  \left(  \mathcal{L}_{\alpha}^{+}\varphi\right)
(x_{0}),\left(  \left(  \mathcal{L}_{\beta}^{+}\varphi\right)  (x_{0})\right)
^{Q}\right\}  \geq\max\left\{  \left(  -\mathcal{L}_{\alpha}^{-}%
\varphi\right)  (x_{0}),\left(  \left(  -\mathcal{L}_{\beta}^{-}%
\varphi\right)  (x_{0})\right)  ^{Q}\right\}  $ for every the pair $\left(
x_{0},\varphi\right)  \in(\Omega\setminus\left\{  x_{\infty}\right\}  )\times
C_{c}^{1}(\mathbb{R}^{N})$ satisfying
\[
\varphi(x_{0})=u(x_{0})\quad\mathrm{and}\quad\varphi(x)\geq u(x)\quad
\forall\,x\in\mathbb{R}^{N}\setminus\left\{  x_{0},x_{\infty}\right\}  .
\]

\end{itemize}

A proof of the following result (where $t^{\pm}=\max\left\{  \pm t,0\right\}
$), adapted from \cite[Lemma 6.5]{Chamb}, can be found in \cite[Lemma
6.1]{FPL}.

\begin{lemma}
\label{L+-}If $s\in(0,1),$ $\varphi\in C_{c}^{1}(\mathbb{R}^{N}),$
$\lim\limits_{m\rightarrow\infty}z_{m}\rightarrow x,$ then,%
\[
\lim_{m\rightarrow\infty}A_{m}(\varphi(z_{m}))=(\mathcal{L}_{s,\infty}%
^{+}\varphi)(x_{0})\quad\mathrm{and}\quad\lim_{m\rightarrow\infty}%
B_{m}(\varphi(z_{m}))=(-\mathcal{L}_{s,\infty}^{-}\varphi)(x_{0}),
\]
where%
\[
\left(  A_{m}(\varphi(z_{m}))\right)  ^{m-1}:=2\int_{\mathbb{R}^{N}}%
\dfrac{\left\vert \varphi(y)-\varphi(z_{m})\right\vert ^{m-2}(\varphi
(y)-\varphi(z_{m}))^{+}}{\left\vert y-z_{m}\right\vert ^{N+sm}}\mathrm{d}y.
\]
and%
\[
(B_{m}(\varphi(z_{m})))^{m-1}:=2\int_{\mathbb{R}^{N}}\dfrac{\left\vert
\varphi(y)-\varphi(z_{m})\right\vert ^{m-2}(\varphi(y)-\varphi(z_{m}))^{-}%
}{\left\vert y-z_{m}\right\vert ^{N+sm}}\mathrm{d}y.
\]

\end{lemma}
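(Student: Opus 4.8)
The plan is to read each of $A_m$ and $B_m$ as the $(m-1)$-th root of a weighted $L^{m-1}$ integral and to recognize the announced limits as the corresponding $L^{\infty}$ quantities, i.e. suprema; these suprema are exactly the values $(\mathcal{L}_{s,\infty}^{+}\varphi)(x_0)$ and $(-\mathcal{L}_{s,\infty}^{-}\varphi)(x_0)$, where $\mathcal{L}_{s,\infty}^{\pm}$ coincide with $\mathcal{L}_{s}^{\pm}$ of (\ref{Ls+-}). Set $g_m(y):=(\varphi(y)-\varphi(z_m))^{+}/|y-z_m|^{s}$ and let $L$ be a Lipschitz constant of $\varphi\in C_c^1(\mathbb{R}^N)$. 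Using $|t|^{m-2}t^{+}=(t^{+})^{m-1}$ and $|y-z_m|^{s(m-1)-(N+sm)}=|y-z_m|^{-(N+s)}$, a direct computation gives
\[
(A_m(\varphi(z_m)))^{m-1}=2\int_{\mathbb{R}^N}\frac{g_m(y)^{m-1}}{|y-z_m|^{N+s}}\,\mathrm{d}y .
\]
Since $\varphi$ is Lipschitz, $g_m(y)\le L|y-z_m|^{1-s}$, so the integrand is dominated near $y=z_m$ by $L^{m-1}|y-z_m|^{(1-s)(m-1)-(N+s)}$, which is integrable exactly when $m>1/(1-s)$; I would note that this holds for all large $m$ (in accordance with Remark \ref{+c}) and that $g_m$ has support in a fixed bounded set because $\varphi$ does. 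Writing $M_m:=\sup_y g_m(y)=(\mathcal{L}_{s,\infty}^{+}\varphi)(z_m)$ (the supremum is nonnegative, and dropping the positive part only adjoins the value $0$), the goal is $\lim_m (A_m(\varphi(z_m)))^{1/(m-1)}=M_\infty:=(\mathcal{L}_{s,\infty}^{+}\varphi)(x_0)$. The treatment of $B_m$ is identical after replacing $g_m$ by $h_m(y):=(\varphi(y)-\varphi(z_m))^{-}/|y-z_m|^{s}$, whose supremum equals $(-\mathcal{L}_{s,\infty}^{-}\varphi)(z_m)$.

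A preliminary step I would isolate is the continuity of the map $z\mapsto(\mathcal{L}_{s,\infty}^{+}\varphi)(z)=\sup_{y\ne z}(\varphi(y)-\varphi(z))/|y-z|^{s}$. Lower semicontinuity is automatic, as this is a supremum of functions continuous in $z$. For upper semicontinuity I would use compactness: when the supremum is positive it is attained at some $y^{\ast}\ne z$ lying in a fixed annulus, because the quotient tends to $0$ as $y\to z$ (Lipschitz bound) and as $|y|\to\infty$ (compact support); then along $z_k\to z$ any maximizers $y_k^{\ast}$ subconverge and yield $\limsup_k(\mathcal{L}_{s,\infty}^{+}\varphi)(z_k)\le(\mathcal{L}_{s,\infty}^{+}\varphi)(z)$. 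Hence $M_m\to M_\infty$.

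For the upper bound I would fix $m_0>1/(1-s)$ and use $g_m^{m-1}\le M_m^{\,m-m_0}g_m^{\,m_0-1}$ to get $(A_m(\varphi(z_m)))^{m-1}\le M_m^{\,m-m_0}J_m$, where $J_m:=2\int g_m^{\,m_0-1}|y-z_m|^{-(N+s)}\,\mathrm{d}y$. The Lipschitz bound near the singularity together with the fixed support shows $\sup_m J_m<\infty$ (uniformly for $z_m$ in a compact neighborhood of $x_0$), so $J_m^{1/(m-1)}\to1$; since $(m-m_0)/(m-1)\to1$ and $\ln M_m$ is bounded, it follows that $\limsup_m A_m(\varphi(z_m))\le M_\infty$ (the degenerate case $M_\infty=0$ being immediate from $A_m\ge0$). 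For the matching lower bound, assuming $M_\infty>0$, I would choose $y_0\ne x_0$ with $(\varphi(y_0)-\varphi(x_0))/|y_0-x_0|^{s}>M_\infty-\varepsilon$; by continuity of $\varphi$ and $z_m\to x_0$ there is a fixed ball $U\ni y_0$ with $x_0\notin\overline U$ on which $g_m>M_\infty-2\varepsilon$ for all large $m$, whence
\[
(A_m(\varphi(z_m)))^{m-1}\ge (M_\infty-2\varepsilon)^{m-1}\,2\int_{U}\frac{\mathrm{d}y}{|y-z_m|^{N+s}} .
\]
The last integral is bounded below by a positive constant uniformly in $m$, so taking $(m-1)$-th roots and then $\varepsilon\to0$ gives $\liminf_m A_m(\varphi(z_m))\ge M_\infty$. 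The same two-sided estimate applied to $h_m$ yields the $B_m$ limit.

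The step I expect to be the main obstacle is controlling the moving base point: both the reference value $\varphi(z_m)$ and the singular weight $|y-z_m|^{-(N+s)}$ drift as $m\to\infty$, and one must guarantee this drift does not perturb the limit. This is precisely what the continuity of $z\mapsto(\mathcal{L}_{s,\infty}^{+}\varphi)(z)$ and the uniform-in-$z_m$ bounds on $J_m$ and on $\int_U|y-z_m|^{-(N+s)}\,\mathrm{d}y$ are meant to handle; the semicontinuity/compactness argument is the delicate ingredient, while the remainder is the classical $\|\cdot\|_{m-1}\to\|\cdot\|_{\infty}$ asymptotics.
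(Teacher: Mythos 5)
The paper does not prove Lemma \ref{L+-} at all: it simply cites \cite[Lemma 6.1]{FPL}, itself adapted from \cite[Lemma 6.5]{Chamb}, so there is no in-paper argument to compare against. Your proof is correct and is essentially the standard argument used in those references: rewrite $\left(A_m\right)^{m-1}$ as $2\int g_m^{m-1}|y-z_m|^{-(N+s)}\,\mathrm{d}y$ with $g_m(y)=(\varphi(y)-\varphi(z_m))^{+}/|y-z_m|^{s}$, obtain the upper bound by the domination $g_m^{m-1}\le M_m^{m-m_0}g_m^{m_0-1}$ with a fixed $m_0>1/(1-s)$ (this is the $\|\cdot\|_{L^{m}}\to\|\cdot\|_{L^{\infty}}$ mechanism), obtain the lower bound by localizing near a near-maximizer, and absorb the drift of the base point via the continuity of $z\mapsto(\mathcal{L}_{s}^{+}\varphi)(z)$, which you correctly identify as the one genuinely delicate ingredient and prove by the compactness/annulus argument. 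One small inaccuracy worth fixing: $g_m$ is \emph{not} compactly supported when $\varphi(z_m)<0$, since then $g_m(y)=-\varphi(z_m)/|y-z_m|^{s}>0$ outside $\operatorname{supp}\varphi$; this does not break the argument, because on that region the integrand of $J_m$ equals $|\varphi(z_m)|^{m_0-1}|y-z_m|^{-(N+sm_0)}$, whose integral at infinity is bounded uniformly in $m$ (as $|\varphi(z_m)|\le\|\varphi\|_{\infty}$ and $z_m$ stays in a compact set), so $\sup_m J_m<\infty$ still holds; the justification should be stated this way rather than via compact support of $g_m$.
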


\begin{proposition}
\label{BH2}The function $u_{\infty}$ is a viscosity solution of (\ref{max=max}%
) in the punctured domain $\Omega\setminus\left\{  x_{\infty}\right\}  .$
Moreover, $u_{\infty}>0$ in $\Omega.$
\end{proposition}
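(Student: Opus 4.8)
The plan is to obtain (\ref{max=max}) by passing to the limit, as $n\to\infty$, in the viscosity formulation (\ref{vispq}) satisfied by each $u_{p_{n}}$ on $\Omega\setminus\{x_{p_{n}}\}$, combining the uniform convergence $u_{p_{n}}\to u_{\infty}$ with the asymptotic identification of the operators provided by Lemma \ref{L+-}. I would prove the supersolution property in detail and only indicate the subsolution property, which follows from the mirror argument (testing from above and reversing every inequality). The strict positivity will then be read off from the supersolution property, exactly as at the end of Section \ref{Sec1}.

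For the supersolution property, fix $(x_{0},\varphi)\in(\Omega\setminus\{x_{\infty}\})\times C_{c}^{1}(\mathbb{R}^{N})$ with $\varphi(x_{0})=u_{\infty}(x_{0})$ and $\varphi\leq u_{\infty}$ on $\mathbb{R}^{N}\setminus\{x_{0},x_{\infty}\}$. As is standard, I may assume that the touching is strict (subtracting from $\varphi$ a small nonnegative $C_{c}^{1}$ bump vanishing to second order at $x_{0}$, and removing it at the end by continuity of $\mathcal{L}_{\alpha}^{\pm}$ and $\mathcal{L}_{\beta}^{\pm}$ at $x_{0}$); this is the point where I would adapt \cite[Proposition 11]{LL} and \cite[Lemma 3.9]{FPL}. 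Strict touching, together with $u_{p_{n}}\to u_{\infty}$ uniformly, forces the minimizers $x_{n}$ of $u_{p_{n}}-\varphi$ over $\mathbb{R}^{N}$ to converge to $x_{0}$, with $k_{n}:=(u_{p_{n}}-\varphi)(x_{n})\to 0$. Then $\psi_{n}:=\varphi+k_{n}$ satisfies $\psi_{n}(x_{n})=u_{p_{n}}(x_{n})$ and $\psi_{n}\leq u_{p_{n}}$ on all of $\mathbb{R}^{N}$; since $x_{0}\neq x_{\infty}$ while $x_{p_{n}}\to x_{\infty}$, for $n$ large $x_{n}$ lies in $\Omega\setminus\{x_{p_{n}}\}$, so $\psi_{n}$ is admissible in (\ref{vispq}). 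Because the operators ignore additive constants (Remark \ref{+c}), the supersolution inequality for $u_{p_{n}}$ reads
\[
(\mathcal{L}_{\alpha,p_{n}}\varphi)(x_{n})+(\mathcal{L}_{\beta,q_{n}}\varphi)(x_{n})\leq 0.
\]

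The heart of the proof is the asymptotics of these two terms. Splitting each operator into its positive and negative parts as in Lemma \ref{L+-}, I write $(\mathcal{L}_{\alpha,p_{n}}\varphi)(x_{n})=A_{n}^{p_{n}-1}-B_{n}^{p_{n}-1}$ and $(\mathcal{L}_{\beta,q_{n}}\varphi)(x_{n})=C_{n}^{q_{n}-1}-D_{n}^{q_{n}-1}$, where Lemma \ref{L+-} (applied with $s=\alpha$ and then $s=\beta$, along $z_{m}=x_{n}\to x_{0}$) gives $A_{n}\to(\mathcal{L}_{\alpha}^{+}\varphi)(x_{0})$, $B_{n}\to(-\mathcal{L}_{\alpha}^{-}\varphi)(x_{0})$, $C_{n}\to(\mathcal{L}_{\beta}^{+}\varphi)(x_{0})$ and $D_{n}\to(-\mathcal{L}_{\beta}^{-}\varphi)(x_{0})$. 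The displayed inequality rearranges to $A_{n}^{p_{n}-1}+C_{n}^{q_{n}-1}\leq B_{n}^{p_{n}-1}+D_{n}^{q_{n}-1}$. I then raise both sides to the power $1/(p_{n}-1)$ and let $n\to\infty$, using $(q_{n}-1)/(p_{n}-1)\to Q$ so that $C_{n}^{(q_{n}-1)/(p_{n}-1)}\to((\mathcal{L}_{\beta}^{+}\varphi)(x_{0}))^{Q}$ and $D_{n}^{(q_{n}-1)/(p_{n}-1)}\to((-\mathcal{L}_{\beta}^{-}\varphi)(x_{0}))^{Q}$, together with the elementary bounds $\max\{\sigma_{n},\tau_{n}\}\leq(\sigma_{n}^{p_{n}-1}+\tau_{n}^{p_{n}-1})^{1/(p_{n}-1)}\leq 2^{1/(p_{n}-1)}\max\{\sigma_{n},\tau_{n}\}$ applied to $\sigma_{n}=A_{n}$, $\tau_{n}=C_{n}^{(q_{n}-1)/(p_{n}-1)}$ (and likewise to $B_{n},D_{n}$). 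This yields
\[
\max\{(\mathcal{L}_{\alpha}^{+}\varphi)(x_{0}),((\mathcal{L}_{\beta}^{+}\varphi)(x_{0}))^{Q}\}\leq\max\{(-\mathcal{L}_{\alpha}^{-}\varphi)(x_{0}),((-\mathcal{L}_{\beta}^{-}\varphi)(x_{0}))^{Q}\},
\]
which is exactly the supersolution requirement; the subsolution requirement follows symmetrically, so $u_{\infty}$ solves (\ref{max=max}) in $\Omega\setminus\{x_{\infty}\}$. I expect this two-exponent passage — reconciling the powers $p_{n}-1$ and $q_{n}-1$ into a single $\max$ governed by $Q$ — to be the main obstacle, with the construction of globally touching test functions for the nonlocal operators as the other delicate point.

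Finally, the strict positivity follows from the supersolution property by the argument used at the end of Section \ref{Sec1}. Since $u_{\infty}\geq 0$, $u_{\infty}\not\equiv 0$ and $u_{\infty}(x_{\infty})=\left\Vert u_{\infty}\right\Vert_{\infty}>0$, any interior zero of $u_{\infty}$ must lie in $\Omega\setminus\{x_{\infty}\}$. Assuming $u_{\infty}(x_{0})=0$ there, I would pick a nonnegative, nontrivial $\varphi\in C_{c}^{1}(\mathbb{R}^{N})$ supported away from $x_{0}$ and $x_{\infty}$ with $\varphi\leq u_{\infty}$; then $\varphi$ touches $u_{\infty}$ from below at $x_{0}$, while $(\mathcal{L}_{s}^{+}\varphi)(x_{0})=\sup_{y}\varphi(y)/|y-x_{0}|^{s}>0$ and $(\mathcal{L}_{s}^{-}\varphi)(x_{0})=\inf_{y}\varphi(y)/|y-x_{0}|^{s}=0$ for $s\in\{\alpha,\beta\}$ (the infimum vanishing because $\varphi$ has compact support). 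The supersolution inequality would then force $\max\{(\mathcal{L}_{\alpha}^{+}\varphi)(x_{0}),((\mathcal{L}_{\beta}^{+}\varphi)(x_{0}))^{Q}\}\leq 0$, contradicting the positivity of its left-hand side. Hence $u_{\infty}>0$ throughout $\Omega$.
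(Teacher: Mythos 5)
Your proposal is correct and follows essentially the same route as the paper's proof: pass to the limit in the viscosity formulation (\ref{vispq}) along touching points $z_{n}\to x_{0}$, split each nonlocal operator into its positive and negative parts, identify the limits via Lemma \ref{L+-}, reconcile the exponents $p_{n}-1$ and $q_{n}-1$ by extracting the $(p_{n}-1)$-th root (your sandwich bound is the same computation the paper performs by factoring out $A_{p_{n}}$), and deduce positivity from the supersolution property exactly as in \cite[Lemma 22]{LL}. The only cosmetic differences are that the paper localizes the touching-point construction to a small ball $B_{\rho}(x_{0})$ avoiding the points $x_{p_{n}}$ rather than perturbing $\varphi$ to make the touching strict, and it leaves that construction to ``standard arguments.''
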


\begin{proof}
We give a sketch of the proof based on \cite{FPL} and \cite{LL}.

In order to verify that $u_{\infty}$ is a supersolution of (\ref{max=max}) in
$\Omega\setminus\left\{  x_{\infty}\right\}  $ we fix a pair $\left(
x_{0},\varphi\right)  \in(\Omega\setminus\left\{  x_{\infty}\right\}  )\times
C_{c}^{1}(\mathbb{R}^{N})$ satisfying
\[
\varphi(x_{0})=u_{\infty}(x_{0})\quad\mathrm{and}\quad\varphi(x)\leq
u_{\infty}(x)\quad\forall\,x\in\mathbb{R}^{N}\setminus\left\{  x_{0}%
,x_{\infty}\right\}  .
\]

Since $x_{0}\not =x_{\infty}=\lim x_{n},$ we can assume that there exist
$n_{0}\in\mathbb{N}$ and a ball $B_{\rho}(x_{0}),$ centered at $x_{0}$ and
with radius $\rho,$ such that
\[
x_{n}\not \in B_{\rho}(x_{0})\subset(\Omega\setminus\left\{  x_{\infty
}\right\}  )\quad\forall\,n\geq n_{0}.
\]

Hence,
\begin{equation}
\mathcal{L}_{\alpha,p_{n}}u_{n}+\mathcal{L}_{\beta,q_{n}}u_{n}=0\quad
\mathrm{in}\,B_{\rho}(x_{0}),\quad\forall\,n\geq n_{0}, \label{unvisc}%
\end{equation}
in the viscosity sense.

By standard arguments, we can construct a sequence $\left\{  z_{n}\right\}
\subset B_{\rho}(x_{0})$ such that $z_{n}\rightarrow x_{0}$ and%
\[
k_{n}:=\min_{B_{\rho}(x_{0})}(u_{n}(x)-\varphi(x))=u_{n}(z_{n})-\varphi
(z_{n})<u_{n}(x)-\varphi(x)\quad\forall\,x\not =x_{n}.
\]

It follows that the function $\psi_{n}:=\varphi+k_{n}$ satisfies
\[
\psi(z_{n})=u_{n}(z_{n})\quad\mathrm{and}\quad\psi(x)<u_{n}(x)\quad
\forall\,x\in B_{\rho}(x_{0}).
\]
Consequently, (see Remark \ref{+c})
\[
(\mathcal{L}_{\alpha,p_{n}}\psi_{n})(z_{n})+(\mathcal{L}_{\beta,q_{n}}\psi
_{n})(z_{n})=(\mathcal{L}_{\alpha,p_{n}}\varphi)(z_{n})+(\mathcal{L}%
_{\beta,q_{n}}\varphi)(z_{n})\leq0,\quad\forall\,n\geq n_{0}.
\]
The inequality can be write as%
\begin{equation}
A_{p_{n}}^{p_{n}-1}+A_{q_{n}}^{q_{n}-1}\leq B_{p_{n}}^{p_{n}-1}+B_{q_{n}%
}^{q_{n}-1} \label{aux3a}%
\end{equation}
where $A_{p_{n}}:=A_{p_{n}}(\varphi(z_{n})),$ $A_{q_{n}}:=A_{q_{n}}%
(\varphi(z_{n})),$ $B_{p_{n}}:=B_{p_{n}}(\varphi(z_{n}))$ and $B_{q_{n}%
}:=B_{q_{n}}(\varphi(z_{n})).$

We have
\begin{align*}
\lim\sqrt[p_{n}-1]{A_{p_{n}}^{p_{n}-1}+A_{q_{n}}^{q_{n}-1}}  &  =\lim
A_{p_{n}}\left(  \sqrt[p_{n}-1]{1+\left(  (A_{q_{n}})^{\frac{q_{n}-1}{p_{n}%
-1}}/A_{p_{n}}\right)  ^{p_{n}-1}}\right) \\
&  =\max\left\{  \lim A_{p_{n}},(\lim A_{q_{n}})^{\lim\frac{q_{n}-1}{p_{n-1}}%
}\right\} \\
&  =\max\left\{  (\mathcal{L}_{\alpha}^{+}\varphi)(x_{0}),\left[  \left(
\mathcal{L}_{\beta}^{+}\varphi\right)  (x_{0})\right]  ^{Q}\right\}  ,
\end{align*}
where the latter equality follows from Lemma \ref{L+-}. Analogously, we
compute
\[
\lim\sqrt[p_{n}-1]{B_{p_{n}}^{p_{n}-1}+B_{q_{n}}^{q_{n}-1}}=\max\left\{
(-\mathcal{L}_{\alpha}^{-}\varphi)(x_{0}),\left[  \left(  -\mathcal{L}_{\beta
}^{-}\varphi\right)  (x_{0})\right]  ^{Q}\right\}  .
\]

Therefore, (\ref{aux3a}) yields%
\[
\max\left\{  (\mathcal{L}_{\alpha}^{+}\varphi)(x_{0}),\left[  \left(
\mathcal{L}_{\beta}^{+}u\right)  (x_{0})\right]  ^{Q}\right\}  \leq
\max\left\{  (-\mathcal{L}_{\alpha}^{-}\varphi)(x_{0}),\left[  \left(
-\mathcal{L}_{\beta}^{-}u\right)  (x_{0})\right]  ^{Q}\right\}  ,
\]
which shows that $u_{\infty}$ is a viscosity supersolution of (\ref{max=max})
in $\Omega\setminus\left\{  x_{\infty}\right\}  .$

Similarly, by symmetric arguments, we can prove that $u_{\infty}$ is a
viscosity subsolution of (\ref{max=max}) in $\Omega\setminus\left\{
x_{\infty}\right\}  .$

The positivity of $u_{\infty}$ in $\Omega$ comes from the fact that
$u_{\infty}\ $is a supersolution of (\ref{max=max}). Indeed, adapting the
argument of \cite[Lemma 22]{LL}, if $u_{\infty}(x_{0})=0,$ then either
\[
(\mathcal{L}_{\alpha}^{+}\varphi)(x_{0})\leq(-\mathcal{L}_{\alpha}^{-}%
\varphi)(x_{0})\quad\mathrm{or}\quad(\mathcal{L}_{\beta}^{+}\varphi
)(x_{0})\leq(-\mathcal{L}_{\beta}^{-}\varphi)(x_{0})
\]
for a nonnegative, nontrivial $\varphi\in C_{c}^{1}(\mathbb{R}^{N})$
satisfying%
\[
\varphi(x_{0})=u_{\infty}(x_{0})=0\leq\varphi(x)\leq u_{\infty}(x)\quad
\forall\,x\in\mathbb{R}^{N}\setminus\left\{  x_{0},x_{\infty}\right\}  .
\]
In the first case, this yields
\[
\frac{\varphi(z)}{\left\vert x_{0}-y\right\vert ^{\alpha}}\leq\sup
_{y\in\mathbb{R}^{N}\setminus\left\{  x_{0}\right\}  }\frac{\varphi
(y)}{\left\vert x_{0}-y\right\vert ^{\alpha}}+\inf_{y\in\mathbb{R}%
^{N}\setminus\left\{  x_{0}\right\}  }\frac{\varphi(y)}{\left\vert
x_{0}-y\right\vert ^{\alpha}}\leq0\quad\forall\,z\in\mathbb{R}^{N}%
\setminus\left\{  x_{0}\right\}
\]
and leads to the contradiction $\varphi\equiv0.$ Obviously, in the second case
we arrive at the same contradiction.
\end{proof}

\begin{proof}
[Proof of Theorem \ref{Main}]It follows by gathering Proposition
\ref{betaHolder}, Corollary \ref{BH1} and Proposition \ref{BH2}.
\end{proof}

\section{Acknowledgements}

G. Ercole was partially supported by CNPq/Brazil (306815/2017-6 and
422806/2018-8) and Fapemig/Brazil (CEX-PPM-00137-18).

\end{document}